\newtheorem{theorem}[equation]{Theorem}
\newtheorem{lemma}[equation]{Lemma}
\newtheorem{prop}[equation]{Proposition}
\newtheorem{cor}[equation]{Corollary}
\theoremstyle{definition}
\newtheorem{example}[equation]{Example}
\theoremstyle{remark}
\newtheorem{remark}[equation]{Remark}
\numberwithin{equation}{section}
\newcommand{\rr}{{\mathbb R}}
\newcommand{\rd}{{\mathbb R^m}}
\newcommand{\supp}{\operatorname{supp}}
\newcommand{\ip}[2]{{\langle #1,#2\rangle}}
\newcommand{\Sd}{S^{m-1}}
\newcommand{\R}{\mathbb R}
\newcommand{\Rd}{\mathbb R^m}
\newcommand{\disc}{\text{disc}}
\newcommand{\eps}{\varepsilon}
\newcommand{\spctim}{\mathbb R^{m+1}}
\newcommand{\pr}{\mathbf P}
\newcommand{\E}{\mathbf E}
\begin{document}

\title{Limit theorems and governing equations for L\'evy walks}

\author{M. Magdziarz}

\author{H.P. Scheffler}

\author{P. Straka}

\author{P. Zebrowski}

\begin{abstract}
 The L\'evy Walk is the process with continuous sample paths which arises from consecutive linear motions of i.i.d.\ lengths with i.i.d.\  directions.
 Assuming speed $1$ and motions in the domain of $\beta$-stable attraction, we prove functional limit theorems and derive governing pseudo-differential equations for the law of the walker's position.
 Both L\'evy Walk and its limit process are continuous and ballistic in the case $\beta \in (0,1)$.
 In the case $\beta \in (1,2)$, the scaling limit of the process is $\beta$-stable and hence discontinuous. This case exhibits an interesting situation in which scaling exponent $1/\beta$ on the process level is seemingly unrelated to the scaling exponent $3-\beta$ of the second moment.
 For $\beta = 2$, the scaling limit is Brownian motion.
\end{abstract}

\maketitle

\section{Introduction}
The Continuous Time Random Walk (CTRW) is a stochastic process determined uniquely by $\mathbb{R}^m$-valued i.i.d.\ random vectors
${Y_1}$, ${Y_2},\dots$ representing consecutive jumps of the random walker,
and $\rr_+$-valued i.i.d. random variables $J_1,J_2,\dots$ representing waiting times between jumps.
The trajectories of CTRWs are step functions with intervals $J_i$ and jumps $Y_i$. Taking $J_1=J_2= {\dots}  = 1$ we obtain the classical random walk process.
CTRWs were introduced for the first time in the pioneering work of Montroll and Weiss \cite{Weiss}.
Since then they became one of the most popular and useful models in statistical physics \cite{Sokolov_book}.
Their first spectacular application can be found in \cite{Montroll}, in which
CTRWs with heavy-tailed waiting times were used as a model of charge carrier transport in amorphous semiconductors.
Today CTRWs are well established mathematical models, particularly
attractive in the modeling of anomalous dynamics characterized by
nonlinear in time mean square displacement $\text{Var}(X(t))\sim t^\alpha$, $\alpha\neq 1$, see \cite{Metzler_Klafter} and references therein.

The main mathematical challenges in the study of
CTRWs are limit theorems on the stochastic process level and governing equations
describing evolution in time of the corresponding probability density functions. There is an extensive literature
in this field: general results for the scaling limits of CTRWs on the stochastic process level can be found in \cite{CTRW1,coupleCTRW,gammagt1,Kolokoltsov09,StrakaHenry,Straka,Sikorskii}.
Governing equations for the densities of the CTRW limits and the related fractional Cauchy problems were analyzed in \cite{Baeumer2001,Baeumer,Sikorskii,Straka,Jurlewicz,Nane,Hahn11}.
Some recent results for particular classes of correlated and coupled CTRWs
can be found in \cite{Jurlewicz2,MM1,MM2,MM3,Barkai2}

The trajectories of CTRW are step functions, thus they are discontinuous.
However, the usual physical requirement for a mathematical model
is to have continuous realizations. 
The straightforward remedy
for this problem is to interpolate the ``stair,'' which results from a waiting time and an instantaneous jump, by a linear motion during the waiting interval.
As a result one obtains a stochastic process with continuous, piecewise linear trajectories -- a proper model of a physical system. 
% which we call a \emph{L\'evy Walk}.
% We emphasize however that in the literature the term L\'evy Walk is also used for \emph{uninterpolated} CTRWs with discontinuous sample paths; in fact the first definition of L\'evy Walks in \cite{Klafter1} was of this kind.

Although much is known about asymptotic properties of CTRWs,
there are no such results for interpolated CTRWs.
The only one exception that we are aware of are the classical random walks.
In this case the linearly interpolated version
has exactly the same limit (in the $M_1$ topology) as the random walk itself, see Corollary 6.2.1 in \cite{Whitt2010}.
As our results will show, the situation can be drastically different for CTRWs and
their linearly interpolated counterparts -- the limits can differ significantly. %NICE!!

In this paper we will concentrate on the class of \emph{L\'evy walks}.
Let us take CTRW satisfying $\|Y_i\| = J_i$ for every $i\in\mathbb{N}$ (length of the jump equal to the length of the preceding waiting time).
Next, take the linearly interpolated version of this CTRW.
The obtained process is called a L\'evy walk (see the next section for its formal definition).
It is important to note that in the literature the term L\'evy Walk is also used for \emph{uninterpolated} CTRWs with 
$\|Y_i\| = J_i$; in fact the first definition of L\'evy Walks in \cite{Klafter1} was of this kind (see also \cite{Klafter2}) in the framework
of generalized master equations. Since then, the L\'evy walk became
one of the most popular models in statistical physics with large number of important applications.
The main idea underlying the L\'evy walk
is the coupled spatial-temporal memory in the CTRW, which is manifested by the condition $\|Y_i\| = J_i$.
Thus, even if we assume that the distribution of jumps $Y_i$ is heavy-tailed with diverging moments,
the L\'evy walk itself has finite moments of all orders (intuitively, long jumps
are penalized by requiring more time to be performed). 
This is very different from
$\alpha$-stable L\'evy processes with $\alpha<2$, which have infinite variance.
Moreover, the trajectories of the L\'evy walk are continuous
and piecewise linear. 
% The condition $\|Y_i\| = J_i$ implies that the walker moves with constant velocity, therefore the moments of all orders are finite, even in the case of heavy-tailed jumps.
These desirable properties make the L\'evy walk particularly attractive for physical applications.
L\'evy walks have been found to be excellent models in the description of various real-life phenomena and complex anomalous systems. The most striking examples are: transport of light in optical materials \cite{Wiersma},
foraging patterns of animals \cite{Bell,Berg,Buchanan}, epidemic spreading \cite{Brockmann2,Dybiec},
human travel \cite{Brockmann,Gonzales}, blinking nano-crystals \cite{Margolin}, and
fluid flow in a rotating annulus \cite{Solomon}.

In this paper we prove functional limit theorems and derive governing equations for L\'evy walks. 
To the best of our knowledge, this is the first systematic
study of linearly interpolated CTRWs and their limits.
Our results show that, in contrast to the standard random walk,
the scaling limit of a linearly interpolated CTRW can be different from the scaling limit of the corresponding CTRW. 
% Also, the one-dimensional limiting distributions
% in general do not coincide, since their densities obey different governing equations.
% We would like to clarify that, in the scientific literature, CTRWs satisfying $\|Y_i\| = J_i$ are also called L\'evy walks (no linear interpolation). These are but different processes than the ones analyzed in this article.
Our results are presented in the following way: 
We introduce the model in Section 2 and clarify some distributional scaling limits (stable laws) in Section 3. 
In Section 4 we prove the weak convergence of rescaled L\'evy Walks  in Skorokhod space. 
Finally, Section 5 contains a derivation of the pseudo-differential governing equation for the probability density of the  scaling limit process.

\section{The model}
Let $\Sd=\{x\in\rd : \|x\|_2=1\}$ be the unit-sphere and let $\lambda$ be any distribution on $\Sd$ modeling the {\it directions} of the random walker. To avoid degenerate cases we further assume that $\supp(\lambda)$ spans $\rd$, where $\supp(\lambda)$ denotes the support of $\lambda$. Let $\Lambda_1,\Lambda_2,\dots$ be i.i.d. on $\Sd$ with distribution $\lambda$. Moreover, let $J_1,J_2,\dots$ be i.i.d. $\rr_+$-valued random variables modeling the random moving times. We also assume that $(\Lambda_i)$ and $(J_i)$ are independent.
Set $T(0)=0$ and $T(n)=\sum_{i=1}^nJ_i$. Then $T(0), T(1), T(2), \dots$ are the times where the walker changes direction. 
Set $S(0)=0$ and let $S(n)=\sum_{j=1}^nJ_j\Lambda_j$, the position after the $n$-the movement. Let $N_t=\max\{n\geq 0 : T(n)\leq t\}$.
Then the {\it L\'evy Walk} $W = \{W(t)\}_{t\geq 0}$ is defined as the stochastic process
\begin{equation}\label{eqm2}
W(t)=\sum_{j=1}^{N_t}J_j\Lambda_j+\bigl(t-T(N_t)\bigr)\Lambda_{N_t+1} .
\end{equation}
Observe that $W(t)$ is right-continuous by construction. Moreover we have $N_t=n$ if and only if $T(n)\leq t<T(n+1)$.
Hence, as $t\uparrow T(n+1)$ we have
\begin{multline*}
W(t)=\sum_{j=1}^nJ_j\Lambda_j+(t-T(n))\Lambda_{n+1}\\  \to \sum_{j=1}^nJ_j\Lambda_j+(T(n+1)-T(n))\Lambda_n = \sum_{j=1}^{n+1}J_j\Lambda_j=W(T(n+1)) ,
\end{multline*}
which shows that $W$ has \emph{continuous} sample paths. Moreover, by construction we have $\|W(t)\|\leq t$ and hence the moments of $W(t)$ are finite for all orders, even if $E(J_1)=\infty$.

\section{Distributional assumptions}

We assume that the distribution of $J_i$ belongs to the domain of attraction \cite{thebook} of some $\beta$-stable law with $\beta \in (0,1)$. 
This means that there exist $b_n>0$ such that we have the following weak convergence of random variables:
\begin{equation}\label{eqm1}
b_nT(n)\Longrightarrow D\quad\text{as $n\to\infty$,}
\end{equation}
where $D$ follows a $\beta$-stable law supported on $[0,\infty)$ with Laplace-transform $E[e^{-sD}]=e^{-s^\beta}$. Here $\Longrightarrow$ denotes convergence in distribution.
The law of $D$ is infinitely divisible with L\'evy measure
\begin{equation}\label{eq0}
\phi_D(dt)=\frac{\beta}{\Gamma(1-\beta)}t^{-\beta-1}dt,
\end{equation}
and the remaining parts of the L\'evy triplet vanish.
% Now let $Y_j=J_j\Lambda_j$. Then $S(n)=\sum_{j=1}^nY_j$. 
The following result provides the joint convergence of $(S(n),T(n))$:

\begin{theorem}\label{thm1}
We have the following weak convergence of probability measures on $\rd\times\rr_+$:
\begin{equation}\label{eq1}
\bigl(b_nS(n),b_nT(n)\bigr)\Longrightarrow (A,D)\quad\text{as $n\to\infty$}.
\end{equation}
The limiting random variable $(A,D)$ has a full (meaning ``not supported on a proper hyperplane'') multivariate $\beta$-stable law on $\rr^{m+1}$. 
Its 
% L\'evy triplet is $(0,0,\phi)$, where the 
L\'evy-measure $\phi$ is given by
%\begin{align} \label{eq2}
%\iint_{\spctim}  f(x,t) \phi(dx,dt)
%= \int_0^\infty \int_{S^{m-1}}  f(t \theta, t)  \lambda(d\theta) \phi_D(dt) 
%\end{align}
\begin{equation}\label{eq2}
\phi(dr,d\theta,dt)=\varepsilon_t(dr)\lambda(d\theta)\phi_D(dt),
\end{equation}
where the $x$-coordinate of $(x,t) \in \mathbb R^{m+1}$ is given in polar form $x=r\cdot\theta$, $r > 0$, $\theta \in S^{m-1}$ and $\eps_t$ denotes the Dirac measure. 
Moreover, $A$ is multivariate $\beta$-stable on $\rd$ with spectral measure $\lambda$ and L\'evy-measure
\begin{equation}\label{eq3}
\phi_A(dr,d\theta)=\frac{\beta}{\Gamma(1-\beta)}r^{-\beta-1}dr\,\lambda(d\theta) .
\end{equation}
\end{theorem}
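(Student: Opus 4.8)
The plan is to recognize that $(S(n), T(n)) = \sum_{i=1}^n (J_i\Lambda_i, J_i)$ is a row sum of i.i.d.\ $\mathbb R^{m+1}$-valued summands, and then apply the classical triangular-array criterion for convergence to an infinitely divisible (here, operator-stable, in fact $\beta$-stable) law. Concretely, with the normalization $b_n$ from \eqref{eqm1}, I would consider the triangular array $X_{n,i} := (b_n J_i \Lambda_i, b_n J_i)$, $1 \le i \le n$, and verify the three standard conditions (see \cite{thebook}): (i) the Poissonian convergence $n \cdot \pr(X_{n,1} \in \cdot)\big|_{\text{away from }0} \to \phi(\cdot)$ vaguely on $\mathbb R^{m+1}\setminus\{0\}$, for the measure $\phi$ in \eqref{eq2}; (ii) the truncated-covariance condition, which here degenerates because $\beta < 1$ means there is no Gaussian component and the truncated second moments vanish in the limit; and (iii) the centering condition, i.e.\ that the truncated means $n\,\Exp[X_{n,1}\mathbf 1_{\|X_{n,1}\|\le \delta}]$ converge, so that after the given centering (which is null here, by the choice of $b_n$ and $\beta<1$) one lands on exactly the stated limit. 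Since $(\Lambda_i)$ and $(J_i)$ are independent, the law of $X_{n,1}$ factors, and in polar-type coordinates $(r,\theta,t)$ on $\mathbb R^m\times\mathbb R_+$ one has $X_{n,1} \eqd (b_n J_1 \cdot \Lambda_1,\, b_n J_1)$, so its distribution is the pushforward of $\lambda \otimes (\text{law of } b_nJ_1)$ under $(\theta, u)\mapsto (u\theta, u)$. This immediately gives the product structure $\eps_t(dr)\,\lambda(d\theta)\,(\cdot)$ appearing in \eqref{eq2}.

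The computational heart is condition (i). I would start from the hypothesis \eqref{eqm1}, which by the classical one-dimensional theory is equivalent to $n\,\pr(b_n J_1 \in \cdot) \to \phi_D(\cdot)$ vaguely on $(0,\infty)$, with $\phi_D$ as in \eqref{eq0}. Then for a bounded continuous test function $f$ on $\mathbb R^{m+1}$ vanishing near the origin,
\[
n\,\Exp\bigl[f(b_nJ_1\Lambda_1,\, b_nJ_1)\bigr]
= \int_{\Sd}\!\int_{(0,\infty)} f(u\theta, u)\; n\,\pr(b_nJ_1 \in du)\;\lambda(d\theta)
\longrightarrow \int_{\Sd}\!\int_{(0,\infty)} f(u\theta,u)\,\phi_D(du)\,\lambda(d\theta),
\]
which is exactly $\int f\,d\phi$ with $\phi$ as in \eqref{eq2}; a routine uniform-integrability/tightness argument (using that $f$ vanishes near $0$ and the vague convergence of the scaled tails) justifies passing the limit inside. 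Conditions (ii) and (iii) are then short: because $0<\beta<1$, the $\beta$-stable limit has no Gaussian part and its L\'evy measure satisfies $\int_{\|x\|\le 1}\|x\|\,\phi(dx) < \infty$, so the truncated variances go to $0$ and the truncated first moments converge; consistency with the normalization $b_n$ forces the shift to be $0$. Invoking the triangular-array limit theorem then yields \eqref{eq1}, with $(A,D)$ infinitely divisible having L\'evy triplet $(0,0,\phi)$, hence $\beta$-stable.

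Fullness of the law of $(A,D)$ follows from the assumption that $\supp(\lambda)$ spans $\mathbb R^m$: the support of $\phi$ is (the closure of) $\{(u\theta,u): u>0,\ \theta\in\supp(\lambda)\}$, whose linear span is all of $\mathbb R^{m+1}$ (the $\theta$'s span $\mathbb R^m$, and the extra $t$-coordinate is nonzero), so $(A,D)$ is not supported on any proper hyperplane. Finally, the marginal statement about $A$ is obtained either by projecting \eqref{eq1} onto the $\mathbb R^m$-coordinate — the image of $\phi$ under $(x,t)\mapsto x$ is $\int_{\Sd}\int_{(0,\infty)} \eps_{u\theta}(\cdot)\,\phi_D(du)\,\lambda(d\theta)$, which in polar form is $\tfrac{\beta}{\Gamma(1-\beta)} r^{-\beta-1}\,dr\,\lambda(d\theta)$, matching \eqref{eq3} — or directly by the same triangular-array argument applied to $S(n)$ alone; the spectral measure of $A$ is then read off as $\lambda$. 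I expect the only slightly delicate point to be the careful handling of the vague-convergence/tightness step in condition (i) near the origin and at infinity, everything else being a direct specialization of standard stable-limit theory in \cite{thebook}.
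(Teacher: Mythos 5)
Your proposal is correct and follows essentially the same route as the paper: both reduce the claim to the convergence of $n$ times the law of the rescaled single summand $(b_nJ_1\Lambda_1,b_nJ_1)$ to the L\'evy measure $\phi$ (the paper checks this on the generating sets $B(u,V)\times(s,\infty)$ via Theorem 3.2.2 of \cite{thebook}, you via test functions vanishing near the origin), exploiting the independence of $J_1$ and $\Lambda_1$ to factor the law and match the product form \eqref{eq2}, with the same arguments for fullness and for the marginal \eqref{eq3}. Your explicit treatment of the truncated-covariance and centering conditions is subsumed in the paper by the citation of the convergence criterion, so there is no substantive difference.
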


\begin{proof}
For $u>0$ and a Borel-set $V\subset\Sd$ let
\[ B(u,V)=\bigl\{x\in\rd : \|x\|_2>u , \frac x{\|x\|_2}\in V\bigr\} \]
and write $\mathcal \partial V$ for the topological boundary of $V$.
Then according to \cite[Th.~3.2.2]{thebook} \eqref{eq1} holds if we can show  for any $u,s>0$ and  $V$ with $\lambda(\partial V)=0$ that
\begin{equation}\label{eq4}
nP\bigl\{b_n J_1 \Lambda_1\in B(u,V), b_nJ_1>s\bigr\} \to \phi\bigl(B(u,V)\times (s,\infty)\bigr)\quad\text{as $n\to\infty$}
\end{equation}
where $\phi$ is given by \eqref{eq2}. Now \eqref{eqm1} and \eqref{eq0} imply that
\[ nP\{b_nJ_1>s\}\to\phi_D(s,\infty)=\frac{s^{-\beta}}{\Gamma(1-\beta)}\quad\text{as $n\to\infty$.} \]
Hence, by independence of $J_1$ and $\Lambda_1$ we obtain
\begin{equation*}
\begin{split}
nP\bigl\{b_nJ_1 \Lambda_1\in B(u,V), b_nJ_1>s\bigr\}  &=nP\bigl\{b_nJ_1>u, \Lambda_1\in V, b_nJ_1>s\bigr\} \\
&= nP\bigl\{b_nJ_1>\max(u,s)\bigr\}\cdot P\bigl\{\Lambda_1\in V\bigr\} \\
&\to \frac{\max(u,s)^{-\beta}}{\Gamma(1-\beta)}\lambda(V) .
\end{split}
\end{equation*}
On the other hand we get from \eqref{eq2} that
\begin{equation*}
\begin{split}
\phi\bigl(B(u,V)\times(s,\infty)\bigr) & = \int_0^\infty\int_{\Sd}\int_0^\infty 1_{B(u,V)}(r\cdot\theta)1_{(s,\infty)}(t)\,\varepsilon_t(dr)\,\lambda(d\theta)\phi_D(dt) \\
&= \int_0^\infty\int_{\Sd}1_{B(u,V)}(t\cdot\theta)1_{(s,\infty)}(t)\,\lambda(d\theta)\phi_D(dt) \\
&=\lambda(V)\int_0^\infty 1_{(\max(u,s),\infty)}(t)\,\phi_D(dt) \\
&=\frac{\max(u,s)^{-\beta}}{\Gamma(1-\beta)}\lambda(V) ,
\end{split}
\end{equation*}
so \eqref{eq4} holds. For the proof of \eqref{eq3} observe that
\begin{equation*}
\begin{split}
\phi_A\bigl(B(u,V)\bigr) &= \phi\bigl(B(u,V)\times\rr_+\bigr) \\
&= \int_0^\infty\int_{\Sd}\int_0^\infty 1_{B(u,V)}(r\cdot\theta)\,\varepsilon_t(dr)\,\lambda(d\theta)\,\phi_D(dt) \\
&=\int_0^\infty\int_{\Sd}1_{B(u,V)}(t\cdot\theta)\,\lambda(d\theta)\,\phi_D(dt) \\
&= \lambda(V)\phi_D(t,\infty).
\end{split}
\end{equation*}
The assumption on the support of $\lambda$ implies that $\supp(\phi)$ spans $\rr^{m+1}$ and hence $(A,D)$ has a full distribution. 
This concludes the proof.
\end{proof}

Since the limit $(A,D)$ in \eqref{eq1} is infinitely divisible, we know from Lemma 2.1 in \cite{coupleCTRW} that the Fourier-Laplace transform of $(A,D)$ for $k\in\rd$ and $s\geq 0$ is given by
\[ \bar{P}_{(A,D)}(k,s):=\int_\rd\int_0^\infty e^{-st}e^{i\ip kx}\,P_{(A,D)}(dx,dt) = e^{-\psi(k,s)} ,\]
where
\[ \psi(k,s)=\int_\rd\int_0^\infty\bigl(1-e^{-st}e^{i\ip kx}\bigr)\,\phi(dx,dt)\]
and $\phi(dx,dt)$ is the L\'evy measure of $(A,D)$.

\begin{lemma}\label{lem1}
We have for $k\in\rd$ and $s\geq 0$ that
\begin{equation}\label{eq5}
\psi(k,s)=\int_{\Sd}\bigl(s-i\ip k\theta\bigr)^\beta\,\lambda(d\theta) .
\end{equation}
Especially we have
\begin{equation*}
\begin{split}
\psi_A(k) & = \psi(k,0)=\int_{\Sd}\bigl(-i\ip k\theta\bigr)^\beta\,\lambda(d\theta) \\
\psi_D(s)&=\psi(0,s)=s^\beta .
\end{split}
\end{equation*}
\end{lemma}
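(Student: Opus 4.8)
The plan is to compute the integral
\[
\psi(k,s)=\int_\rd\int_0^\infty\bigl(1-e^{-st}e^{i\ip kx}\bigr)\,\phi(dx,dt)
\]
directly, substituting the explicit form \eqref{eq2} of the L\'evy measure $\phi$. First I would pass to polar coordinates in the $x$-variable, writing $x=r\cdot\theta$ with $r>0$ and $\theta\in\Sd$; because of the factor $\varepsilon_t(dr)$ in \eqref{eq2}, the $r$-integration collapses and forces $r=t$, so that $x=t\cdot\theta$ and $\ip kx=t\ip k\theta$. This reduces the triple integral to
\[
\psi(k,s)=\int_{\Sd}\int_0^\infty\bigl(1-e^{-st}e^{it\ip k\theta}\bigr)\,\phi_D(dt)\,\lambda(d\theta)
=\int_{\Sd}\int_0^\infty\bigl(1-e^{-(s-i\ip k\theta)t}\bigr)\,\frac{\beta}{\Gamma(1-\beta)}t^{-\beta-1}\,dt\,\lambda(d\theta).
\]

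The remaining task is to evaluate the inner one-dimensional integral
\[
I(w)=\frac{\beta}{\Gamma(1-\beta)}\int_0^\infty\bigl(1-e^{-wt}\bigr)t^{-\beta-1}\,dt
\]
and identify it as $w^\beta$ for the relevant complex parameter $w=s-i\ip k\theta$, which has $\Re w = s\ge 0$. For $w>0$ real this is the classical formula for the Laplace exponent of the $\beta$-stable subordinator — it follows from the substitution $u=wt$ together with the identity $\int_0^\infty(1-e^{-u})u^{-\beta-1}\,du=\Gamma(1-\beta)/\beta$, obtained by an integration by parts that turns it into $\frac1\beta\int_0^\infty e^{-u}u^{-\beta}\,du=\frac1\beta\Gamma(1-\beta)$ (valid since $\beta\in(0,1)$, so the boundary terms vanish). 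To extend this to complex $w$ with $\Re w\ge 0$, I would argue by analytic continuation: both $w\mapsto I(w)$ and $w\mapsto w^\beta$ (with the principal branch) are continuous on the closed right half-plane $\{\Re w\ge 0\}$ and holomorphic on its interior, and they agree on the positive real axis, hence they agree everywhere on $\{\Re w\ge0\}$. Holomorphy of $I$ on the open half-plane follows from dominated convergence / Morera's theorem (the integrand is dominated near $0$ by $\mathrm{const}\cdot|w|\,t^{-\beta}$ and near $\infty$ by $\mathrm{const}\cdot t^{-\beta-1}$, uniformly on compact subsets of $\{\Re w>0\}$); continuity up to the boundary is handled similarly. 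Substituting $w=s-i\ip k\theta$ gives the integrand $(s-i\ip k\theta)^\beta$ and yields \eqref{eq5}.

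The two special cases are then immediate: setting $s=0$ gives $\psi_A(k)=\psi(k,0)=\int_{\Sd}(-i\ip k\theta)^\beta\,\lambda(d\theta)$, and setting $k=0$ gives $\psi_D(s)=\psi(0,s)=\int_{\Sd}s^\beta\,\lambda(d\theta)=s^\beta$ since $\lambda$ is a probability measure; this last identity is also consistent with the prescribed Laplace transform $E[e^{-sD}]=e^{-s^\beta}$ of $D$. I expect the only genuinely delicate point to be the justification of the branch of $(s-i\ip k\theta)^\beta$ and the passage to complex $w$ in $I(w)$ — one must be careful that $\Re w=s\ge0$ keeps $w$ off the negative real axis (the branch cut), with the boundary case $s=0$, $\ip k\theta\ne0$ landing on the imaginary axis where the principal branch is still continuous, so no difficulty arises there; the rest is routine computation.
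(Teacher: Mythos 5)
Your proposal is correct and follows essentially the same route as the paper: substitute the explicit L\'evy measure \eqref{eq2}, let the Dirac factor collapse $r$ to $t$, apply Fubini, and evaluate $\int_0^\infty(1-e^{-wt})\,\phi_D(dt)=w^\beta$ for $w=s-i\ip k\theta$. The only difference is that the paper asserts this last identity without comment, whereas you supply the integration-by-parts computation for real $w>0$ and the analytic-continuation argument to the closed right half-plane — a worthwhile but routine elaboration of the same proof.
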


\begin{proof}
Using \eqref{eq2} we have
\begin{equation*}
\begin{split}
\psi(k,s)&=\int_0^\infty\int_{\Sd}\int_0^\infty\bigl(1-e^{-st}e^{i\ip k{r\theta}}\bigr)\,\phi(dr,d\theta,dt) \\
&=\int_0^\infty\int_{\Sd}\bigl(1-e^{-st}e^{i\ip k{t\theta}}\bigr)\,\lambda(d\theta)\,\phi_D(dt) \\
&=\int_{\Sd}\int_0^\infty\bigl(1-e^{-t(s-i\ip k\theta)}\bigr)\,\phi_D(dt)\,\lambda(d\theta) \\
&=\int_{\Sd}\bigl(s-i\ip k\theta\bigr)^\beta\,\lambda(d\theta) .
\end{split}
\end{equation*}
\end{proof}

Note that the symbol $\psi(k,s)$ in \eqref{eq5} defines a pseudo-differential operator, which by the form of the L\'evy measure in \eqref{eq2}, for bounded $C^1$ functions takes the form
\begin{equation}\label{eqps1}
\psi(i\nabla_x,\partial_t)f(x,t) = \int_{\Sd}\bigl(\partial_t+\ip\theta{\nabla_x}\bigr)^\beta f(x,t)\,\lambda(d\theta)
\end{equation} 
where $(\partial_t+\ip\theta{\nabla_x})^\beta$ is the {\it fractional material derivative} in direction $\theta$, given by
\begin{equation}\label{eqps2}
\bigl(\partial_t+\ip\theta{\nabla_x}\bigr)^\beta f(x,t)=\frac 1{\Gamma(1-\beta)}\int_0^\infty \bigl(f(x+r\theta,t+r)-f(x,t)\bigr)r^{-\beta-1}\,dr .
\end{equation}
See the references \cite{M3HPStri,BMMST,Sokolov_Metzler} for details. 
\medskip

If we let $b(c)=b_{[c]}$ then $b(c)$ is regularly varying with index $-1/\beta$. Moreover it follows from Theorem 16.14 in \cite{kallenberg} that \eqref{eq1} implies convergence of the corresponding stochastic processes
\begin{equation}\label{eqP1}
\bigl\{(b(c)S(\lfloor ct\rfloor),b(c)T(\lfloor ct\rfloor))\bigr\}_{t\geq 0}\overset{J_1}{\longrightarrow} \bigr\{(A(t),D(t))\bigr\}_{t\geq 0}
\end{equation}
as $c\to\infty$, where $\{(A(t),D(t))\bigr\}_{t\geq 0}$ is the L\'evy process associates to $(A,D)$ with FLT $\bar{P}_{(A(t),D(t))}(k,s)=e^{-t\psi(k,s)}$ where $\psi(k,s)$ is given by \eqref{eq5}.

\begin{prop}\label{psupp}
\begin{enumerate}
\item
For any $u>0$ the distribution of $(A(u),D(u))$ is supported on the cone 
\[ K=\bigl\{(x,t)\in\rd\times\rr_+ : \|x\|\leq t, t\geq 0\bigr\}. \]
\item
The probability law of $(A(1),D(1)) = (A,D)$ admits a Lebesgue density
$p(x,t)$.
\item
$(A(u),D(u))$ is Lebesgue absolutely continuous with density
$$
p_u(x,t) = u^{-(m+1)/\beta} p(u^{-1/\beta}x,u^{-1/\beta}t).
$$ 
\item
For all $(x,t)$ in the interior of $K$, the density $p_u(x,t)>0$ is strictly positive. 
\end{enumerate}
\end{prop}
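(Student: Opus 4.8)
The plan is to take the four assertions in order, reducing (3) and (4) to statements about the single variable $(A,D)=(A(1),D(1))$ via $\beta$-stable self-similarity. For (1): the L\'evy measure $\phi$ of \eqref{eq2} is carried by $\{(t\theta,t):t>0,\ \theta\in\Sd\}$, which lies in the closed convex cone $K$. The exponent displayed just before Lemma \ref{lem1} is in compensator-free form with no Gaussian part and no drift, and since $\beta\in(0,1)$ forces $\int(1\wedge\|(x,t)\|)\,\phi(dx,dt)<\infty$ (near $t=0$ the integrand is $\sim t^{-\beta}$), the L\'evy process $(A(\cdot),D(\cdot))$ has finite variation and $(A(u),D(u))$ equals a.s.\ the convergent sum of its jumps on $(0,u]$, each of which lies in $\supp\phi\subset K$. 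Every partial sum lies in the convex cone $K$, which is closed, so $(A(u),D(u))\in K$ a.s. In fact the standard description of the support of an infinitely divisible law identifies $\supp(A(u),D(u))$ with the closed convex cone $C:=\{(tc,t):t\ge 0,\ c\in\operatorname{conv}(\supp\lambda)\}\subseteq K$; below I read ``interior of $K$'' under the (apparently intended) assumption that $\operatorname{conv}(\supp\lambda)$ is the full unit ball, e.g.\ $\supp\lambda=\Sd$, so that $C=K$.

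For (2) I would prove absolute continuity by Fourier inversion. Continuing $e^{-\psi(k,s)}$ analytically to $s=-i\tau$ gives the characteristic function $\hat\mu(k,\tau)=e^{-\psi(k,-i\tau)}$ of $(A,D)$ on $\rr^{m+1}$, and by \eqref{eq5} together with $(-ia)^{\beta}=|a|^{\beta}e^{-i\operatorname{sgn}(a)\beta\pi/2}$ one finds
\[
|\hat\mu(k,\tau)|=\exp\!\Bigl(-\cos\tfrac{\beta\pi}{2}\,g(k,\tau)\Bigr),\qquad g(k,\tau):=\int_{\Sd}|\tau+\ip k\theta|^{\beta}\,\lambda(d\theta).
\]
The function $g$ is continuous and positively homogeneous of degree $\beta$, and it vanishes only if $\tau+\ip k\theta=0$ for every $\theta\in\supp\lambda$, i.e.\ if $(k,\tau)\perp\spann\{(\theta,1):\theta\in\supp\lambda\}$; by the fullness established in Theorem \ref{thm1} this span is all of $\rr^{m+1}$, so $g>0$ off the origin and hence $g(k,\tau)\ge c_0\|(k,\tau)\|^{\beta}$ for some $c_0>0$ by compactness of the unit sphere. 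Since $\cos(\beta\pi/2)>0$, $|\hat\mu|$ is dominated by $\exp(-c\|(k,\tau)\|^{\beta})\in L^1(\rr^{m+1})$, so Fourier inversion yields a bounded continuous density $p(x,t)$ (indeed $p\in C^\infty$, as $\hat\mu$ decays faster than any polynomial).

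For (3): \eqref{eq5} gives $\psi(ck,cs)=c^{\beta}\psi(k,s)$ for $c>0$, so the L\'evy process has Fourier--Laplace transform $e^{-u\psi(k,s)}=e^{-\psi(u^{1/\beta}k,\,u^{1/\beta}s)}$ at time $u$, whence $(A(u),D(u))\eqd u^{1/\beta}(A,D)$; pushing $p$ forward under $\zeta\mapsto u^{1/\beta}\zeta$ on $\rr^{m+1}$ gives exactly the stated $p_u$. For (4), by (3) and scale invariance of $\operatorname{int}K=\operatorname{int}C$ it suffices to show $p>0$ on $\operatorname{int}C$, which is nonempty by fullness. From $(A,D)\eqd (A(1/2),D(1/2))+(A'(1/2),D'(1/2))$ with independent summands we get $p=q*q$, where $q$ is the continuous density of $(A(1/2),D(1/2))$ (a rescaling of $p$ by (3)), with $\supp q=2^{-1/\beta}C=C$; put $G:=\{q>0\}$, an open set with $\overline G=C$. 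Fix $(x,t)\in\operatorname{int}C$; then $\tfrac12(x,t)$ lies in $\operatorname{int}C\cap\operatorname{int}\bigl((x,t)-C\bigr)$, so in a small ball about $\tfrac12(x,t)$ both $G$ and $(x,t)-G$ are open and dense, and a Baire argument gives $z$ in their intersection, i.e.\ $q(z)>0$ and $q((x,t)-z)>0$. By continuity the integrand of $q*q$ is positive near $z$, so $p(x,t)=\int q(z')\,q((x,t)-z')\,dz'>0$.

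Parts (1)--(3) are routine (support of an infinitely divisible law on a cone; $L^1$ Fourier inversion for a full stable law; stable self-similarity). The genuine obstacle is (4): since $\phi$ is singular --- carried by the $m$-dimensional surface $\{\|x\|=t\}$ --- one cannot simply quote a general ``the transition density of a L\'evy process is strictly positive on the interior of its support'' statement; one must instead combine the self-similarity reduction with the elementary but not entirely obvious fact that an open set $G$ dense in a full-dimensional closed convex cone $C$ satisfies $\operatorname{int}C\subseteq G+G$.
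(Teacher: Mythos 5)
Your proof is correct, but it takes a genuinely different and far more self-contained route than the paper, which disposes of (1), (2) and (4) in one stroke by citing Theorem~2 of Ashbaugh--Rajput--Rama-Murthy--Sundberg \cite{pms1} on supports and positivity of densities of full stable laws (noting that the L\'evy measure sits on $S=\{\|x\|=t\}$ whose closed convex cone is $K$), and gets (3) from stability exactly as you do. What you build by hand --- the finite-variation jump representation for (1), the $|\hat\mu(k,\tau)|\le\exp(-c\|(k,\tau)\|^{\beta})$ bound via fullness and homogeneity for (2), and the convolution-square $p=q*q$ with the ``two open dense subsets of a ball must meet'' argument for (4) --- is all sound, and (4) is indeed the only part where real work is needed, since the L\'evy measure is singular and concentrated on an $m$-dimensional cone surface; your Baire-free density argument is essentially the proof mechanism inside \cite{pms1} made explicit. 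The trade-off is length versus transparency: the citation gives a two-line proof, while your version is verifiable without consulting the reference and, as a bonus, yields that $p$ is $C^\infty$. Your caveat about $C=\{(tc,t):t\ge0,\ c\in\operatorname{conv}(\supp\lambda)\}$ versus $K$ is a legitimate observation: the support of the law (and hence the region of positivity) is the closed convex cone generated by $\supp\phi$, which is $C$ and not all of $K$ unless $\operatorname{conv}(\supp\lambda)$ is the whole closed unit ball; the paper's proof glosses over this by speaking of the cone generated by $S$ rather than by $\supp\phi$, so statement (4) should properly read ``interior of $C$''. The only cosmetic gap is in (2): you should say a word on why the characteristic function of $(A,D)$ is $e^{-\psi(k,-i\tau)}$ --- most cleanly, by rerunning the computation of Lemma~\ref{lem1} with $s$ replaced by $-i\tau$ directly in the L\'evy--Khintchine formula rather than invoking analytic continuation --- but this is routine.
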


\begin{proof}
Observe that by \eqref{eq2} the L\'evy measure of $(A,D)$ is supported on 
\[ S=\bigl\{(x,t)\in\rd\times\rr_+ : \|x\|=t, t\geq 0\bigr \}. \]
Moreover, by assumption on $\lambda$ we know that $\supp(\phi)$ spans $\rd\times \mathbb R_+$. Since the closed convex cone generated by $S$ equals $K$, assertions (1), (2) and (4) follow from Theorem 2 in \cite{pms1}.
(3) is a simple consequence of the stability property.
\end{proof}

\section{The Limit Theorem}

In this section, we give a functional limit theorem for L\'evy walks, i.e.\ we prove scaling limit theorems for $\{W(t)\}_{t \ge 0}$ on the level of stochastic processes,
using the continuous mapping theorem on Skorokhod spaces of sample paths.

We first clarify our notation.
For a separable metric space $E$ (here, $E$ will be $\mathbb R^d$ or $\R$),
write $\mathbb D(E)$ for the set of r.c.l.l.\ paths in $E$.
By this we mean the maps from $[0,\infty)$ to $E$ which are right-continuous with left-hand limits.
For $x \in \mathbb D(E)$, write $x^-:t \mapsto x(t-)$ if $t > 0$, and set $x^-(0) = x(0)$; we call $x^-$ the l.c.r.l.\ (or ``left-continuous with right-hand limits'') version of $x$.
Write $\mathbb D_{u,\uparrow} \subset \mathbb D(\mathbb R)$ for all unbounded and non-decreasing paths $y$ in $\mathbb R$.
% , which satisfy $y(0) \le 0$.
For $y \in \mathbb D_{u,\uparrow}$, define the \textit{inverse path}
$y^{-1}(t) = \inf\{s : y(s) > t\}$.
We note that $y^{-1}$ is r.c.l.l.\ and $y^{-1} \in \mathbb D_{u,\uparrow}$.
If $x\in \mathbb D(\mathbb R^d)$ and $y \in \mathbb D_{u,\uparrow}$, then it can be checked that the composition $x\circ y$ is again r.c.l.l.\ and thus defines an element in $ \mathbb D(\mathbb R^d)$.
Moreover, $x^-\circ y^-$ is l.c.r.l. For its r.c.l.l.\ version we write
$(x^-\circ y^-)^+
%\in D(\mathbb R^d)
$, and note that it is defined via $(x^- \circ y^-)^+(t) = (x^- \circ y^-)(t+)$.
Finally, we write $\disc(x) := \{t \ge 0: x^-(t) \neq x(t)\}$ for the set of discontinuities of a r.c.l.l.\ or l.c.r.l.\ path $x$.

Throughout this section, we let
\begin{align*}
 a &\in \mathbb D(\Rd)
& d &\in \mathbb D_{u,\uparrow}
& e &= d^{-1}
\\
x &= (a^- \circ e^-)^+
& y &= a \circ e
\\
g &= (d^- \circ e^-)^+
& h&= d \circ e
\end{align*}
and write $\mathcal R(d):= \{ d(t): t \ge 0 \} $ for the range of $d$.
We say that $t$ is a left-limit point of $\mathcal R(d)$ if there exists a sequence $\{t_n\} \subset \mathcal R(d)$ such that $t_n < t$ and $\lim_{n \to \infty} t_n = t$.
% , and that $t$ is left-isolated with respect to $\mathcal R(d)$ if it is not a left-limit point.
``Right-limit point''
% and ``right-isolated point'' are
is then defined similarly.
Recall that $e \in \mathbb D_{u, \uparrow}$.  We say that $e$ is right-increasing at $t$ if $u > t \Rightarrow e(u) > e(t)$, and left-increasing if $0 \le s < t \Rightarrow e(s) < e(t)$.
For later use, we note that it can be checked that
\begin{align*}
 t \text{ is a right-limit point of } \mathcal R(d)
 \Leftrightarrow e \text{ is right-increasing at } t
 \Leftrightarrow h(t) = t,
 \\
 t \text{ is a left-limit point of } \mathcal R(d)
 \Leftrightarrow e^- \text{ is left-increasing at } t
 \Leftrightarrow g^-(t) = t
\end{align*}

We define the L\'evy Walk path mapping as follows:
\begin{align*}
 \Phi : \mathbb D(\Rd) \times \mathbb D_{u, \uparrow} &\to \mathbb D(\Rd)
 \\
 (a,d) &\mapsto w
\end{align*}
where
\begin{align} \label{eq:levywalkpath}
w(t)=
\begin{cases}
x(t) & \text{ if } t \in \mathcal R(d) \\
x(t) + \dfrac{t-g(t)}{h(t)-g(t)} (y(t) - x(t)) & \text{ if } t\notin \mathcal R(d)
\end{cases}
\end{align}
Lemma \ref{lem:well-defined} below shows that $\Phi$ is well-defined.
We will see later that $\Phi$ maps the paths of the rescaled cumulative process
$\left( b_n S(\cdot), b_n T(\cdot) \right)$ to the paths of the L\'evy Walk $W$.
We also note that since $g(t) \le t \le h(t)$, the fraction in \eqref{eq:levywalkpath} lies in the interval $[0,1]$ for every $t \ge 0$.  Hence we have
\begin{align} \label{eq:interpol}
 w(t) \in [x(t), y(t)], \quad t \ge 0,
\end{align}
where $[x(t),y(t)]$ denotes the compact linear segment $\subset \Rd$ of points between $x(t)$ and $y(t)$.
The experienced reader may think of $x$ as a CTRW path, and of $y$ as an overshooting CTRW path (or lagging and leading CTRW path).

\begin{lemma} \label{lem:well-defined}
 If $t \notin \mathcal R(d)$, then $h(t) > g(t)$. Moreover, $w$ is r.c.l.l.
\end{lemma}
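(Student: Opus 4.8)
The plan is to split the statement into its two assertions and handle them separately, relying on the characterizations of left- and right-limit points of $\mathcal R(d)$ recorded just before the lemma.

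For the first assertion, suppose $t \notin \mathcal R(d)$. I would argue contrapositively on the two defining relations: if $h(t) = t$ then $t$ is a right-limit point of $\mathcal R(d)$, and if $g^-(t) = t$ then $t$ is a left-limit point of $\mathcal R(d)$; in either case, since $\mathcal R(d)$ together with its left-limit points is the closure of the range of $d$ and $d$ is right-continuous, one checks that $t$ would have to lie in $\mathcal R(d)$ — more carefully, a right-limit point of $\mathcal R(d)$ is always hit by $d$ because $d$ is nondecreasing and right-continuous, so $h(t)=t$ forces $t \in \mathcal R(d)$. Hence for $t \notin \mathcal R(d)$ we must have $h(t) > t$. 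It remains to rule out $g(t) = t$ together with $h(t) = t$, but more directly: $t \notin \mathcal R(d)$ means $d$ jumps over $t$, i.e. there is some $s$ with $d(s-) \le t < d(s)$ and $d(s-) < d(s)$; then $e(t) = \inf\{r : d(r) > t\} = s$, so $h(t) = d(e(t)) = d(s) > t$ and $g(t) = (d^- \circ e^-)^+(t) \le d(s-) \le t$, while in fact the strict inequality $g(t) \le d(s-)$ combined with $d(s-) \le t < d(s) = h(t)$ already gives $h(t) > g(t)$. So the first claim reduces to carefully unwinding the definitions of $e$, $g$, $h$ at a point skipped by $d$; this is the routine but slightly fiddly part.

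For the second assertion, that $w$ is r.c.l.l., I would proceed in two stages. First, $w$ agrees with $x$ on $\mathcal R(d)$ and with the explicit interpolation formula off $\mathcal R(d)$; since $x = (a^- \circ e^-)^+$, $y = a \circ e$, $g = (d^- \circ e^-)^+$, $h = d \circ e$ are all r.c.l.l. (compositions and r.c.l.l.-versions of r.c.l.l. paths, as discussed in the text), the interpolation formula defines an r.c.l.l. function on the open set $[0,\infty) \setminus \mathcal R(d)$ wherever $h - g$ does not vanish, which by the first part of the lemma is exactly where it is applied. So $w$ is r.c.l.l. on the complement of $\mathcal R(d)$, and the only delicate points are those in $\mathcal R(d)$, and among those the accumulation points of the complement. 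Second, at a point $t \in \mathcal R(d)$ I would verify right-continuity and existence of left limits by a case analysis according to whether $t$ is approached from the right/left within $\mathcal R(d)$ or through gaps: using $w(s) \in [x(s), y(s)]$ from \eqref{eq:interpol} and the fact that on a gap the interpolation weight $(s - g(s))/(h(s) - g(s))$ is monotone and continuous in $s$ with the correct one-sided limits ($0$ at the left endpoint $g$-value, $1$ at the right endpoint $h$-value), one shows $\lim_{s \downarrow t} w(s) = x(t) = w(t)$ and that $\lim_{s \uparrow t} w(s)$ exists. The key reductions here are: a right-limit point of $\mathcal R(d)$ satisfies $h(t) = t$ so the interpolation weight tends to the same value as we cross into a gap, forcing right-continuity; and a left-limit point satisfies $g^-(t) = t$, which controls the left limit.

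The main obstacle is the second stage: matching the interpolated formula continuously with the value $x(t)$ at the boundary points $t \in \mathcal R(d)$, in particular at those $t$ that are simultaneously endpoints of a gap on one side and limit points of $\mathcal R(d)$ on the other. This requires using the three-way equivalences stated before the lemma (left/right-limit point $\Leftrightarrow$ $e$ one-sided increasing $\Leftrightarrow$ $g^- (t)=t$ or $h(t)=t$) to control the behavior of $g$, $h$, $x$, $y$ near such $t$, and then checking that the interpolation weight and the segment endpoints conspire to give the right one-sided limits. Everything else is bookkeeping with r.c.l.l. compositions.
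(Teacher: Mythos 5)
Your proposal is correct and follows essentially the same route as the paper: the first claim via right-closedness of $\mathcal R(d)$ (equivalently, your observation that $d$ must jump over any $t\notin\mathcal R(d)$, giving $g(t)\le t<h(t)$), and the second via a case analysis over left/right-limit points of $\mathcal R(d)$ using the sandwich \eqref{eq:interpol}. The only content you leave implicit --- your ``endpoints conspire'' step --- is the pair of identities $x(t)=y(t)$ at right-limit points and $x^-(t)=y^-(t)$ at left-limit points, which is precisely what the paper's Cases 2, 4 and 6 establish in order to collapse the segment $[x(t),y(t)]$ to a point where the interpolation-weight argument alone does not suffice.
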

\begin{proof}
Since $d$ is non-decreasing and r.c.l.l., $\mathcal R(d)$ is right-closed \cite[p.146]{Hof69}, i.e.\ closed in the topology generated by all intervals of the form $[u,v) \subset [0,\infty)$.
It follows that $t \notin \mathcal R(d)$ cannot be a right-limit point of $\mathcal R(d)$.
This implies $t < h(t)$, and clearly $g(t) \le t$.

For the remaining statement, let $ t\ge 0 $.
Throughout, $t_n$ is a sequence with $\lim t_n = t$. We write $t_n \uparrow t$ if additionally $t_n < t$, and $t_n \downarrow t$ if $t_n > t$.

We consider the following exhaustive list of cases:

Case 1: $t \notin \mathcal R(d)$, $t$ is neither left nor right limit point of $\mathcal R(d)$.
It follows that $e$ and hence $g, h, x$ and $y$ are constant in a neighbourhood of $t$.
Thus $w$ is continuous at $t$.

Case 2: $t \notin \mathcal R(d)$, $t$ a left-limit point of $\mathcal R(d)$ (but not a right-limit point). Let $t_n \downarrow t$.
Then for large $n$, $t_n \notin \mathcal R(d)$, and $g(t_n) = t$, $h(t_n) = h(t) > t$, and thus $w(t_n) \to x(t) = w(t)$ since $x$ is r.c.l.l., and it follows that $w$ is right-continuous at $t$.
Now let $t_n \uparrow t$.
We observe that $e(t_n) \uparrow e^-(t)$, and hence $y(t_n) = a(e(t_n)) \to a^-(e^-(t)) = x^-(t)$, which means $x^-(t) = y^-(t)$.
But then \eqref{eq:interpol} implies that the left-hand limit of $w$ at $t$ exists and equals $w^-(t) = x^-(t)$.

Case 3:
$t \in \mathcal R(d)$, $t$ is neither left nor right limit point of $\mathcal R(d)$ (i.e.\ an isolated point).  Let $t_n \downarrow t$.  For large $n$, $t_n \notin \mathcal R(d)$.
Then $h(t_n) = h(t) > t$ and $g(t_n) = g(t)$. It follows that $w(t_n) \to x(t) = w(t)$, and $w$ is right-continuous at $t$.
Now let $t_n \uparrow t$. Again for large $n$, $t_n \notin \mathcal R(d)$.
Then $g(t_n) = g^-(t) < t$ and $h(t_n) = t$.  The fraction converges to $1$,
and $\lim w(t_n) = y^-(t)$ exists.

Case 4:
$t \in \mathcal R(d)$, $t$ is a right-limit point but not a left-limit point of $\mathcal R(d)$.
Let $t_n \downarrow t$.
We find that $e(t_n) \downarrow e(t)$, which implies $e^-(t_n) \downarrow e(t)$.
But then $a^-(e^-(t_n)) \to a(e(t)) = y(t)$, and thus $x(t) = y(t)$.
Then \eqref{eq:interpol} implies that $\lim w(t_n)$ exists and equals $x(t) = w(t)$, and $w$ is right-continuous at $t$.
Now let $t_n \uparrow t$. Proceeding as in Case 3, we see that $\lim w(t_n)$ exists and equals $y^-(t)$.

Case 5:
$t\in \mathcal R(d)$ is a left-limit point but not a right-limit point of $\mathcal R(d)$.
For $t_n \downarrow t$, proceed as in Case 3, and for $t_n \uparrow t$ proceed as in Case 2.

Case 6:
$t \in \mathcal R(d)$ is both left- and right-limit point of $\mathcal R(d)$.
For $t_n \downarrow t$, proceed as in Case 4, and for $t_n \uparrow t$, proceed as in Case 2.
\end{proof}

For later use, we give the following sufficient condition for the continuity of L\'evy Walk sample paths:

\begin{lemma} \label{lem:wcont}
Let $d$ be strictly increasing and assume $\disc(a) \subset \disc(d)$.
Then the path $w$ is continuous.
\end{lemma}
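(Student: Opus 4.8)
The plan is to reduce everything to the six-case analysis already carried out in the proof of Lemma~\ref{lem:well-defined}. Since that lemma tells us $w$ is r.c.l.l., it suffices to verify $w^-(t) = w(t)$ for every $t > 0$. First I would record two elementary consequences of the standing hypothesis that $d$ is strictly increasing. (i) The inverse $e = d^{-1}$ has no flat stretch that could produce a jump, so $e$ is continuous; in particular $e^- = e$, and wherever $e$ is ``left-increasing'' (resp.\ ``right-increasing'') at $t$ it is in fact strictly increasing through $t$ from that side. (ii) Every point of $\mathcal R(d)$ is a right-limit point of $\mathcal R(d)$: if $t = d(s)$ and $s_n \downarrow s$, then $d(s_n) \downarrow d(s) = t$ with $d(s_n) > t$ by right-continuity and strict monotonicity, so $h(t) = t$ on $\mathcal R(d)$. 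By the characterisations of left/right-limit points displayed before Lemma~\ref{lem:well-defined}, (ii) rules out Cases~3 and~5 in that proof (both require a point of $\mathcal R(d)$ that is not a right-limit point), and $\mathcal R(d)$ being right-closed rules out $t \notin \mathcal R(d)$ being a right-limit point; so only Cases~1 and~2 (for $t \notin \mathcal R(d)$) and Cases~4 and~6 (for $t \in \mathcal R(d)$) can occur.

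Next I would dispatch the four surviving cases, reusing the left- and right-hand limit computations from the proof of Lemma~\ref{lem:well-defined}. In Case~1 there is nothing to prove. In Cases~4 and~6, $t \in \mathcal R(d)$ so $w(t) = x(t)$, and since $t$ is a right-limit point the argument of Case~4 there gives $x(t) = y(t)$; hence $w(t) = x(t) = y(t)$. It remains to identify $w^-(t)$. In Case~4, $t$ is not a left-limit point of $\mathcal R(d)$, so $e$ is constant on a left-neighbourhood of $t$; then $y = a \circ e$ is constant there, and the Case~4 computation gives $w^-(t) = y^-(t) = y(t) = w(t)$. In Cases~2 and~6, the ``$t_n \uparrow t$'' computation of Case~2 gives $w^-(t) = x^-(t) = y^-(t) = a^-(e^-(t)) = a(e(t)-)$ (using $e^- = e$); on the value side, in Case~2 the path $e$ is constant to the right of $t$, so $x(t) = (a^- \circ e)^+(t) = a(e(t)-) = w^-(t)$ and $w$ is continuous at $t$, while in Case~6 we have $x(t) = y(t) = a(e(t))$ as noted.

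Finally, the hypothesis $\disc(a) \subset \disc(d)$ is used only to close Case~6: there $t = d(s)$ with $s = e(t)$, and since $t$ is simultaneously a left- and a right-limit point of $\mathcal R(d)$, $d$ cannot jump at $s$ (a jump would create the gap $[d(s-),d(s))$ immediately to the left of $t$, preventing $t$ from being a left-limit point); thus $s \notin \disc(d)$, hence $s = e(t) \notin \disc(a)$, so $a(e(t)-) = a(e(t))$ and $w^-(t) = a(e(t)-) = a(e(t)) = w(t)$. Collecting Cases~1, 2, 4, 6 proves that $\disc(w) = \emptyset$, i.e.\ that $w$ is continuous. I expect the only delicate points to be the bookkeeping in Cases~2 and~6 — correctly evaluating the left limit of $x$ at $t$ as $a(e(t)-)$ versus its value as $a(e(t))$, which hinges on continuity and strict monotonicity of $e$ together with the left/right-limit-point dictionary — and the verification in the first step that no other left/right configuration survives when $d$ is strictly increasing; the rest is a direct unwinding of the definitions of $x, y, g, h$ and of \eqref{eq:levywalkpath}.
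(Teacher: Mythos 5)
Your proposal is correct and follows essentially the same route as the paper: reduce to the six cases of Lemma~\ref{lem:well-defined}, use the continuity of $e=d^{-1}$ (so $e^-=e$) to kill Cases~3 and~5, verify $w^-(t)=w(t)$ in Cases~1, 2 and 4 directly, and invoke the hypothesis $\disc(a)\subset\disc(d)$ only in Case~6. The only (cosmetic) difference is that you evaluate $x(t)$ via the local constancy of $e$ rather than along sequences $t_n\downarrow t$; the content is identical.
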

\begin{proof}
We reiterate Cases 1-6 from the proof of Lemma \ref{lem:well-defined}, and show that $w^-(t) = w(t)$ in each case.  Since $d$ is strictly increasing, $e$ is now continuous, i.e.\ $e^- = e$.

Case 1: nothing to show.

Case 2: If $t_n \downarrow t$, then for large $n$ we have $e^-(t_n) = e^-(t)$, and hence
$w(t) = x(t) = \lim a^-(e^-(t_n)) = \lim a^-(e^-(t)) = a^-(e^-(t)) = x^-(t) = w^-(t)$.

Case 3: is empty since $d$ is strictly increasing.

Case 4:
Let $t_n \uparrow t$. For large $n$, $e(t_n) = e(t)$. Hence
$y^-(t) = \lim a(e(t_n)) = a(e(t)) = y(t)$.
Letting $t_n \downarrow t$, we see that $x(t) = \lim a^-(e(t_n)) = a(e(t)) = y(t)$, and so
$w(t) = x(t) = y(t) = y^-(t) = w^-(t)$.

Case 5:
Is empty since $d$ is strictly increasing and r.c.l.l.
% We have $w(t) = x(t)$ and $w^-(t) = x^-(t)$.
% Again letting $t_n \downarrow t$, we see that $e(t_n) = e(t)$ for large $n$. But then  $x(t) = \lim a^-(e(t_n)) = a^-(e(t)) = x^-(t)$.

Case 6: We have $w(t) = x(t)$ and $w^-(t) = x^-(t)$.
Since $e$ is (left- and right-) increasing at $t$, $d$ must be continuous at $e(t)$, i.e.\ $e(t) \notin \disc(d)$.  By assumption, $e(t) \notin \disc(a)$, and hence
$x^-(t) = a^-(e(t)) = a(e(t))$.
Letting $t_n \downarrow t$ and using that $e(t_n) \downarrow e(t)$, we see $x(t) = \lim a^-(e(t_n)) = a(e(t))$, i.e.\ $x^-(t) = x(t)$.
\end{proof}

In the above lemma, the assumption that $d$ be strictly increasing is unnecessary; it was only used for convenience.
We now prove the key ingredient in the continuous mapping argument.
Throughout, we use the terminology of Whitt \cite{Whitt2010}.

\begin{prop} \label{prop:mapping}
Endow the domain $\mathbb D(\Rd) \times \mathbb D_{u, \uparrow}$ of $\Phi$ with the trace of the $J_1$ topology on $\mathbb D(\R^{d+1})$.
Endow the codomain $\mathbb D(\Rd)$ of $\Phi$ with the $M_1$ topology.
Then $\Phi$ is continuous at every point $(a,d)$ which is such that $d$ is strictly increasing.
\end{prop}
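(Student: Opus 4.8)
The plan is to prove sequential continuity: take $(a_n, d_n) \to (a,d)$ in the $J_1$ topology on $\mathbb D(\mathbb R^{d+1})$, with $d$ strictly increasing, and show $w_n := \Phi(a_n,d_n) \to w := \Phi(a,d)$ in the $M_1$ topology on $\mathbb D(\mathbb R^d)$. I would first record the ``input'' consequences of $J_1$-convergence together with strict monotonicity of $d$: by standard results (Whitt, Ch.~13, and the inverse map continuity results of Whitt Ch.~13.6), we have $d_n \to d$, $e_n := d_n^{-1} \to e := d^{-1}$, and in fact since $d$ is strictly increasing, $e$ is continuous, so the convergences $e_n \to e$, $h_n \to h$, $g_n \to g$ all hold locally uniformly. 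Similarly, since composition with a continuous increasing time-change is $J_1$-continuous, $y_n = a_n \circ e_n \to a \circ e = y$ in $J_1$, and the lagging version $x_n = (a_n^- \circ e_n^-)^+ \to x$ in $J_1$ as well (the left/right-continuous regularisations cause no trouble because $e$ is continuous). Note also that $\disc(y) = \disc(x) \subset e(\disc(a))$ is at most countable.

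Next I would set up the $M_1$ convergence via parametric representations. Recall that $x_n \to x$ in $M_1$ iff there exist parametric representations (i.e.\ continuous nondecreasing surjections $(\hat r_n, \hat t_n)$ of the completed graphs onto $[0,1]$, abusing notation) converging uniformly to one of $x$; the same for $y_n \to y$. The core idea is: $w$ is, at each $t$, an explicit convex combination of $x(t)$ and $y(t)$ with coefficient $\rho(t) := (t - g(t))/(h(t) - g(t)) \in [0,1]$ (interpreted as, say, $0$ on $\mathcal R(d)$, consistently with \eqref{eq:levywalkpath}), and likewise $w_n$ interpolates $x_n, y_n$ with coefficient $\rho_n$. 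I would build a parametric representation of $w$ by \emph{simultaneously} parametrising the graphs of $x$ and $y$: since their discontinuity sets coincide and the jumps happen ``at the same places'' (both jump exactly at points of $\mathcal R(d)$, i.e.\ at plateaus of $e$), one can choose a common time-parametrisation $u \mapsto s(u)$ together with consistent vertical parametrisations, so that $u \mapsto (s(u), \text{interpolant})$ is a parametric representation of $w$. Then the analogous construction for $w_n$, fed by the parametric representations of $x_n$ and $y_n$ that converge to those of $x,y$, together with the locally uniform convergence $g_n \to g$, $h_n \to h$ (hence $\rho_n \to \rho$ off the countable bad set, and suitably along the parametrisation), yields parametric representations of $w_n$ converging uniformly to that of $w$. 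That gives $w_n \to w$ in $M_1$.

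The main obstacle, and the step I would spend the most care on, is the bookkeeping near points $t \in \mathcal R(d)$ that are \emph{two-sided limit points} of $\mathcal R(d)$ (Case 6 of Lemma~\ref{lem:well-defined}): there, $x$ and $y$ need not be continuous, the interpolation weight $\rho$ is discontinuous, and for the approximating sequence the point $t$ may sit just outside $\mathcal R(d_n)$, so $w_n$ is genuinely interpolating on a tiny interval whose length $h_n(t) - g_n(t) \to 0$. One must verify that along the common parametric representation these near-degenerate interpolations do not create transverse excursions — i.e.\ that $w_n$ stays (asymptotically) within the $M_1$ ``thickened graph'' of $w$. The key estimate is \eqref{eq:interpol}: $w_n(\cdot)$ always lies on the segment $[x_n(\cdot), y_n(\cdot)]$, and $x_n, y_n$ are $M_1$-close to $x, y$ which share a graph completion; so any such excursion is controlled by the oscillation of $x$ and $y$ between nearby points, which is where the coincidence $\disc(x) = \disc(y)$ and the plateau structure of $e$ must be exploited to close the argument. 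I would organise this by again walking through the Cases 1–6 and checking the parametric representations glue correctly in each, with Case 6 (and the analysis of $t \in \mathcal R(d_n) \setminus \mathcal R(d)$ and vice versa) being the crux.
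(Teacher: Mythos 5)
Your overall strategy (continuous mapping via parametric representations for $M_1$) is a legitimate alternative to the paper's route, which instead verifies the local oscillation criteria (M1)/(M2) case by case. However, the proposal as written rests on two structural claims that are false, and it defers exactly the step that constitutes the bulk of the paper's proof. First, $g_n \to g$ and $h_n \to h$ cannot hold locally uniformly in general: even when $d$ is strictly increasing, $d$ may have jumps, and then $g = (d^-\circ e^-)^+$ and $h = d\circ e$ are themselves discontinuous at the endpoints of the gaps of $\mathcal R(d)$, so only $J_1$-convergence is available (the paper cites \cite[Prop~2.3]{StrakaHenry} for precisely this). Second, and more seriously, the identity $\disc(x)=\disc(y)$ with ``jumps happening at the same places'' is wrong, and it is the premise on which your common parametrisation of the graphs of $x$ and $y$ is built. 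Concretely, if $d$ jumps at $u$ and $a$ jumps at $u$, set $t=d(u)$ and $s_0=d(u-)$: one checks that $y$ jumps at $s_0$ (from $a^-(u)$ to $a(u)$) and is continuous at $t$, whereas $x$ is continuous at $s_0$ and jumps at $t$ (from $a^-(u)$ to $a(u)$). The two jumps occur at the \emph{opposite} endpoints of the gap $(s_0,t)$ of $\mathcal R(d)$, and the linear interpolation of $w$ across that gap is exactly what reconciles them (this is the paper's Case 4, where $w$ turns out to be continuous even though $x$ is not). A simultaneous parametrisation of the completed graphs of $x$ and $y$ whose convex combination traces the completed graph of $w$ therefore does not exist in the naive form you describe; it would have to be replaced by a parametrisation that traverses the gap intervals, which is a different (and not obviously easier) construction.

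Beyond these two errors, the genuinely hard part --- the behaviour at two-sided limit points of $\mathcal R(d)$ where $w$ is discontinuous, and the control of $w_n$ on the shrinking intervals $[g_n(t),h_n(t)]$ with $h_n(t)-g_n(t)\to 0$ --- is identified correctly but only described, not proved. In the paper this is Case 6, which occupies most of the argument: it requires splitting $t_n$ into subsequences according to whether $x_n(t_n)$, $y_n(t_n)$ approach the left or right limit, transferring this information back to the underlying paths $a_n,d_n$ via (J1')--(J3) to locate $g_n(t_n)$ and $h_n(t_n)$, and then a four-fold sub-case analysis of the relative positions of $s_n,t_n,u_n$ and $g_n(t_n),h_n(t_n)$ to verify (M2). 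Your appeal to \eqref{eq:interpol} plus ``the oscillation of $x$ and $y$ between nearby points'' is the right intuition but does not substitute for this verification; in particular, nothing in the proposal rules out that $w_n(t_n)$ leaves the segment $[w_n(s_n),w_n(u_n)]$ by an amount controlled only through the (false) coincidence of the jump sets. As it stands, the proposal is a plausible plan with an incorrect foundation at its key construction, not a proof.
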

\begin{proof}
Let $(a,d) \in \mathbb D(\Rd) \times \mathbb D_{u, \uparrow}$ be such that $d$ is strictly increasing, and consider a sequence $\{(a_n,d_n)\}_{n\in \mathbb N} \subset \mathbb D(\Rd) \times \mathbb D_{u, \uparrow}$ such that $\lim (a_n,d_n) =  (a,d)$ with respect to the $J_1$ topology.
Write $e_n$, $g_n$, $h_n$, $x_n$ and $y_n$ for the paths which are associated with $a_n$ and $d_n$ in the same fashion as $e, g, h, x$ and $y$ are associated with $a$ and $d$.

Recall the following equivalent description of convergence in $\mathbb D(E)$, where $E$ is any separable metric space (for us, $E = \Rd$ or $E = \R$)
% with metric $\eta$
\cite[Th~3.6.5]{EthierKurtz}:
With respect to the topology $J_1$, we have $\xi_n \to \xi$ if and only if the following three statements hold simultaneously for every $t \ge 0$:
\begin{enumerate}
 \item[(J1)]
 For every sequence $t_n \to t$ we have
 $\xi_n(t_n) \to \{\xi^-(t), \xi(t)\}$.
 \item[(J2)]
 Let $t_n \to t$ be such that $\xi_n(t_n) \to \xi^-(t)$.
 If $s_n \le t_n$, then $\xi_n(s_n) \to \xi^-(t)$.
 \item[(J3)]
 Let $t_n \to t$ be such that $\xi_n(t_n) \to \xi(t)$.
 If $u_n \ge t_n$, then $\xi_n(u_n) \to \xi(t)$.
\end{enumerate}
Note that the first statement means that $\xi_n(t_n)$ has at most two limit points, either $\xi^-(t)$ or $\xi(t)$.
If $t \notin \disc(\xi)$, then the three statements collapse to $\xi_n(t_n) \to \xi(t)$.
In \cite{StrakaHenry}, it was pointed out that (J1) may be replaced by
\begin{enumerate}
 \item[(J1')]
 For every sequence $t_n \to t$ we have
 $\xi^-_n(t_n) \to \{\xi^-(t), \xi(t)\}$,
\end{enumerate}
which we will use below.
We also give an equivalent description of convergence in $\mathbb D(E)$ with respect to the topology $M_1$, which can be derived from \cite[Th~12.5.1(v)]{Whitt2010}:
With respect to $M_1$, we have $\xi_n \to \xi$ if and only if the following two statements hold simultaneously:
\begin{enumerate}
 \item[(M1)]
 Let $t \notin \disc(\xi)$. If $t_n \to t$, then $\xi_n(t_n) \to \xi(t)$.
 \item[(M2)]
 Let $t \in \disc(\xi)$. If $s_n \to t$, $t_n \to t$, $u_n \to t$ where $s_n < t_n < u_n$, then $\|\xi_n(t_n) -  [\xi_n(s_n),\xi_n(u_n)] \| \to 0$.
\end{enumerate}
Note that in (M2), the brackets denote a line segment as defined earlier, and $\| \cdot \|$ denotes the Euclidean norm in $\Rd$.

The proof of Lemma \ref{lem:wcont} shows that $t \notin \disc(w)$ in the cases 1-5, as the assumption $\disc(a) \subset \disc(d)$ was only used in Case 6.
Hence we need to show (M1) in the cases 1-5 and (M1) \& (M2) in Case 6.
From previous work \cite[Prop~ 2.3]{StrakaHenry} we know that
\begin{align} \label{eq:previous}
x_n  \to x \text{ and } y_n  \to y \text{ in } \mathbb D(\Rd) \text{ and }
g_n  \to g \text{ and } h_n  \to h \text{ in } \mathbb D(\R)
\end{align}
with respect to $J_1$.

Case 1: $e, g,h,x$ and $y$ are all constant in a neighbourhood of $t$ and thus continuous.
Hence $g_n(t_n) \to g(t)$, $h_n(t_n) \to h(t)$, $x_n(t_n) \to x(t)$ and $y_n(t_n) \to y(t)$.
Since $g(t) < t < h(t)$, for large $n$ we have $g_n(t_n) < t_n < h_n(t_n)$, and hence $t_n \notin \mathcal R(d_n)$.
It follows that $w_n(t_n) \to w(t)$.

Case 2:
We have $g^-(t) = g(t) = t = h^-(t) <  h(t)$ and, as noted before, $w(t) = w^-(t) = x(t) = x^-(t) = y^-(t)$.
Hence $x_n(t_n) \to x(t)$, and $y_n(t_n) \to \{y^-(t), y(t)\}$ by (J1) applied to $x_n$ and $y_n$.
Split $t_n$ into two subsequences $t'_n$ and $t''_n$ such that
$y_n(t'_n) \to y^-(t)$ and $y_n(t''_n) \to y(t)$.
Equation \eqref{eq:interpol} immediately yields $w_n(t'_n) \to w(t)$.
It remains to show $w_n(t''_n) \to w(t)$.
(J1) applied to the sequence $(a_n,d_n)$ at $\tau_n = e_n(t''_n)$ then yields
$(a_n(\tau_n),d_n(\tau_n)) \to \{(a^-(\tau),d^-(\tau)),(a(\tau),d(\tau))\}$ where $\tau = e(t)$.
But we have $a_n(\tau_n) = a_n(e_n(t''_n)) = y_n(t''_n) \to y(t) = a(\tau)$, and hence it follows that $d_n(\tau_n) \to d(\tau)$, which means
$h_n(t_n) = d_n(\tau_n) \to d(\tau) = h(t)$.
By continuity at $t$, we have $g_n(t_n) \to g(t)$ and $x_n(t_n) \to x(t)$.
In the second case of $w_n(t''_n)$ in \eqref{eq:levywalkpath}, the fraction converges to $0$, and hence both cases then yield the limit $x(t) = w(t)$.

Case 3:
is empty since $d$ is strictly increasing and r.c.l.l.

Case 4:
We have $g^-(t) < g(t) = t = h^-(t) = h(t)$.
As noted before, we have $w(t) = w^-(t) = x(t) = y(t) = y^-(t)$.
Then $y_n(t_n) \to y(t)$, and $x_n(t_n) \to \{x^-(t), x(t)\}$.
Again split $t_n$ into two subsequences $t'_n$ and $t''_n$ such that
$x_n(t'_n) \to x^-(t)$ and $x_n(t''_n) \to x(t)$.
From \eqref{eq:interpol}, it follows that $w_n(t''_n) \to w(t)$.
It remains to show $w_n(t'_n) \to w(t)$.
First, we observe that it is possible to choose a sequence $\eps_n \downarrow 0$ so that
$(x_n(t'_n), g_n(t'_n)) = (a_n^-(e_n^-(t'_n+)), d_n^-(e_n^-(t'_n+)))$ and
$(x_n^-(t'_n+\eps_n), g_n^-(t'_n + \eps_n)) = (a_n^-(e_n^-(t'_n + \eps_n)), d_n^-(e_n^-(t'_n+\eps_n)))$ yield the same limit as $n \to \infty$.
Let $\tau_n = e_n^-(t'_n + \eps_n)$. Then $\tau_n \to \tau = e(t)$.
(J1') applied to the sequence $(a^-_n, d^-_n)$ at $\tau_n$ yields
$(a_n^-(\tau_n), d_n^-(\tau_n)) \to \{ (a^-(\tau), d^-(\tau)), (a(\tau),d(\tau)) \}$.
But $a_n^-(\tau_n) = x^-_n(t'_n) \to x^-(t) = a^-(\tau)$, and hence $d_n^-(\tau_n) \to d^-(\tau)$.
This means $g_n(t'_n) \to g^-(t)$.
If $g_n(t'_n) \in \mathcal R(d_n)$ for infinitely many $t'_n$, then
$g_n(t'_n) = t'_n$ for infinitely many $t'_n$, which contradicts
$g_n(t'_n) \to g^-(t) < t$.
Hence for large $n$ we have $g_n(t'_n) \notin \mathcal R(d_n)$.
The second case in the definition \eqref{eq:levywalkpath} of $w_n(t'_n)$ applies.
The fraction converges to $1$, and $w(t'_n) \to y(t) = w(t)$.

Case 5:
Is empty since $d$ is strictly increasing and r.c.l.l.
% We have $g^-(t) = g(t) = t = h(t) < h^-(t)$.
% As noted before, $w(t) = w^-(t) = x(t) = x^-(t) = y^-(t)$.
% Proceed as in Case 2.

Case 6:
We have $g^-(t) = g(t) = t = h^-(t) = h(t)$, and moreover
$w(t) = x(t) = y(t) = a(e(t))$ and $w^-(t) = x^-(t) = y^-(t) = a^-(e(t))$.
Assuming that $t \notin \disc(w)$ yields $x^-(t) = x(t)$ and $y^-(t) = y(t)$,
which means $x_n(t_n) \to x(t)= w(t)$ and $y_n(t_n) \to y(t) = w(t)$.
Then $w_n(t_n) \to w(t)$ by \eqref{eq:interpol}.

Now we assume that $t \in \disc(w)$ and prove (M2).
We have $\tau := e(t) \in \disc(a)$.
By $J_1$-convergence of $a_n \to a$, there exists a sequence $\tau_n \to \tau$
such that $a_n(\tau_n) \to a(\tau)$ and $a_n^-(\tau_n) \to a^-(\tau)$.

Consider now the (possibly finite or empty) subsequence $t'_n$ of $t_n$ for which
$y_n(t'_n) \to y^-(t)$.
Then necessarily $e_n(t'_n) < \tau_n$ for all but finitely many $n$,
or else (J3) would contradict $y_n(t'_n) \to y^-(t)$.
Choose a sequence $\eps_n \downarrow 0$ such that $e_n(t'_n) + \eps_n < \tau_n$.
Choose another sequence $\eps'_n \downarrow 0$
such that $\lim x_n(t'_n) = \lim x_n^-(t'_n+\eps'_n)$.
Define $\eps''_n = \min(\eps'_n,\eps_n)$.
Then
$\lim x_n(t'_n) = \lim x_n^-(t'_n+\eps''_n)
= \lim a_n^-(e_n^-(t'_n + \eps''_n))$,
and since $e_n^-(t'_n + \eps''_n) \le e_n(t'_n + \eps''_n) < \tau_n$,
the last limit equals $a^-(\tau) = y^-(t)$.
Hence we have shown that $x_n(t'_n) \to y^-(t) = w^-(t)$.
Recalling that $w_n(t'_n) \in [x_n(t'_n),y_n(t'_n)]$, this means $w_n(t'_n) \to w^-(t)$.
Let $s'_n$ be the subsequence of $s_n$ which matches $t'_n$.
Since $s'_n < t'_n$, we have $e_n(s'_n) \le e_n(t'_n)$ and $e_n^-(s'_n) \le e_n(t'_n)$.
(J2) then implies $y_n(s_n) \to y^-(t)$ and $x_n(s_n) \to y^-(t)$.
Using \eqref{eq:interpol}, this means $w_n(s'_n) \to y^-(t) = w^-(t)$.
(M2) hence holds for $w_n$ and the subsequence $t'_n$, since
$w_n(t'_n)$ and $w_n(s'_n)$ have the same limit.

Now consider the subsequence $t''_n$ of $t_n$ defined by $x_n(t''_n) \to x(t) = y(t)$.
With an argument dual to the above, it can be shown that then $y_n(t''_n) \to y(t)$ as well.
By \eqref{eq:interpol}, $w_n(t''_n) \to y(t)$,
and using (J3), one shows $w_n(u''_n) \to y(t)$.
Since $w_n(t''_n)$ and $w_n(u''_n)$ have the same limit, (M2) follows for the subsequence $t''_n$.

It remains to prove (M2) for the subsequence of remaining elements of $t_n$, $s_n$ and $u_n$. For ease of notation we denote these elements again by $t_n$, $s_n$ and $u_n$.
We have $y_n(t_n) \to y(t) = w(t)$ and $x_n(t_n) \to x^-(t) = w^-(t)$.

Firstly, consider the members $t_n$, $s_n$ and $u_n$ for which
$g_n(t_n) \le s_n < t_n < u_n < h_n(t_n)$.
From the definition \eqref{eq:levywalkpath} of $w$,
we see that $w_n(t_n) \in [w_n(s_n), w_n(u_n)]$,
i.e.\ $\|w_n(t_n) - [w_n(s_n), w_n(u_n)]\| = 0$ and (M2) holds.

Secondly, for the members which satisfy
$s_n < g_n(t_n) \le t_n < u_n < h_n(t_n)$,
we have $w_n(t_n) \in [x_n(t_n), w_n(u_n)]$ by definition \eqref{eq:levywalkpath} of $w$.
Then (M2) will hold if $\lim w_n(s_n) = \lim x_n(t_n) = x^-(t)$.
Indeed, we have that $(s_n,t_n] \cap \mathcal R(d_n)$ is non-empty,
and so
% $e_n(s_n) = \inf\{r: d_n(r) > s_n\} < \inf\{r: d_n(r) \ge t_n\} = e_n^-(t_n)$.
$e_n(s_n) < e_n(t_n)$.
As noted before, there exist $\eps_n \downarrow 0$  such that
$a^-(e(t)) = x^-(t) = \lim x_n(t_n) = \lim a_n^-(e_n^-(t_n+))
= \lim a_n^-(e_n^-(t_n+\eps_n))$.
Now $e_n^-(t_n+\eps_n) \ge e_n(t_n) > e_n(s_n)$.
Hence by (J2) applied to $a_n\to a$ we have
$y_n(s_n) = a_n(e_n(s_n)) \to a^-(e(t)) = x^-(t)$.
Moreover, $x_n(s_n) \to x^-(t)$ by (J2) applied to $x_n\to x$.
By \eqref{eq:interpol}, $w_n(s_n) \to x^-(t)$.

Thirdly, for the members for which
$g_n(t_n) \le s_n < t_n \le h_n(t_n) \le u_n$,
we have $w_n(t_n) \in [w_n(s_n),y_n(t_n)]$ by definition \eqref{eq:levywalkpath} of $w$.
Then (M2) will hold if $\lim w_n(u_n) = \lim y_n(t_n) = y(t)$.
Indeed, proceed similarly to the previous case.
We have that $(t_n,u_n] \cap \mathcal R(d_n)$ is non-empty (note that $t_n = h_n(t_n)$ means that $t_n$ is a right-limit point of $\mathcal R(d_n)$), and so $e_n(t_n) < e_n(u_n)$.
We have $a(e(t)) = y(t) = \lim y_n(t_n) = \lim a_n(e_n(t_n))$.
Let $\eps_n$ be such that $\lim x_n^-(u_n + \eps_n) = \lim x_n(u_n)$.
Now $e_n^-(u_n+\eps_n) \ge e_n(u_n) > e_n(t_n)$.
Hence by (J3) applied to $a_n\to a$ we have
$\lim x_n(u_n) = \lim a_n^-(e_n^-(u_n+))
= \lim a_n^-(e_n^-(u_n+\eps_n)) = a(e(t)) = y(t)$.
Moreover, (J3) applied directly to $y_n\to y$ yields $y_n(u_n) \to y(t)$.
By \eqref{eq:interpol}, $w_n(u_n) \to y(t)$.

Fourthly, consider the members  for which
$s_n < g_n(t_n) \le t_n \le h_n(t_n) \le u_n$.
As in ``secondly'' and ``thirdly'', it follows that
$w_n(s_n) \to x^-(t)$ and $w_n(u_n) \to y(t)$.
Since $w_n(t_n) \in [x_n(t_n),y_n(t_n)]$ and $x_n(t_n) \to x^-(t)$ and $y_n(t_n) \to y(t)$, (M2) follows.
As the above four cases are exhaustive for the sequences $t_n$, $s_n$ and $u_n$, the proof is finished.
\end{proof}

\begin{lemma} \label{lem:measurablePhi}
 The mapping $\Phi$ is Borel measurable.
\end{lemma}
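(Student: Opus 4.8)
The plan is to reduce measurability of the path map $\Phi$ to the measurability of its one-dimensional coordinate functions, and then to check each of these by assembling it from a few elementary measurable operations. Because the $M_1$ topology on the codomain $\mathbb D(\Rd)$ is weaker than the $J_1$ topology, its Borel $\sigma$-algebra is contained in the $J_1$-Borel $\sigma$-algebra, so it is enough to show that $\Phi$ is measurable into $(\mathbb D(\Rd),J_1)$. I would then invoke the classical fact \cite{EthierKurtz} that the $J_1$-Borel $\sigma$-algebra on $\mathbb D(E)$, with $E$ Polish, is generated by the evaluations $\xi\mapsto\xi(q)$ for $q$ in a countable dense subset of $[0,\infty)$ (for r.c.l.l.\ paths these recover $\xi\mapsto\xi(t)$ at every $t\ge 0$ by right-continuity); the same applies to the trace $\sigma$-algebra on the domain $\mathbb D(\Rd)\times\mathbb D_{u,\uparrow}\subset\mathbb D(\mathbb R^{m+1})$, which is therefore generated by the maps $(a,d)\mapsto a(q)$ and $(a,d)\mapsto d(q)$, $q\in\mathbb Q_+$, together with their left-hand limits. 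Thus the whole lemma reduces to showing that, for each fixed $t\ge 0$, the map $(a,d)\mapsto w(t)=\Phi(a,d)(t)$ is Borel measurable.

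Next I would set up a small toolbox. First, $d\mapsto e(t)$ is measurable, since $e(t)=\inf\{s:d(s)>t\}$ yields $\{e(t)<q\}=\bigcup_{s\in\mathbb Q_+,\,s<q}\{d(s)>t\}$, and consequently $d\mapsto e^-(t)=\sup_{s\in\mathbb Q_+,\,s<t}e(s)$ is measurable as well (and similarly for $d^-$). Second, the joint evaluation $(a,s)\mapsto a(s)$ on $\mathbb D(\Rd)\times[0,\infty)$, and its left-limit analogue $(a,s)\mapsto a(s-)$, are Borel measurable: with $q_k(s)=2^{-k}(\lfloor 2^k s\rfloor+1)\downarrow s$ the map $(a,s)\mapsto a(q_k(s))=\sum_j 1_{[j2^{-k},(j+1)2^{-k})}(s)\,a((j+1)2^{-k})$ is a countable sum of products of Borel functions, and $a(s)=\lim_k a(q_k(s))$ by right-continuity. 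Composing these ingredients, I would conclude that for each fixed $t$ the maps $(a,d)\mapsto y(t)=a(e(t))$, $(a,d)\mapsto h(t)=d(e(t))$, $(a,d)\mapsto x(t)=\lim_{q\downarrow t,\,q\in\mathbb Q}a^-(e^-(q))$ and $(a,d)\mapsto g(t)=\lim_{q\downarrow t,\,q\in\mathbb Q}d^-(e^-(q))$ are Borel measurable; the last two use the definitions $x=(a^-\circ e^-)^+$ and $g=(d^-\circ e^-)^+$ together with the fact that the r.c.l.l.\ version of an l.c.r.l.\ path is recovered as a right-hand limit along rationals.

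Finally, I would rewrite the defining formula \eqref{eq:levywalkpath} as the single expression
\[
 w(t)=1_{\mathcal R(d)}(t)\,x(t)+\bigl(1-1_{\mathcal R(d)}(t)\bigr)\Bigl(x(t)+\frac{t-g(t)}{h(t)-g(t)}\,(y(t)-x(t))\Bigr),
\]
which is a sum of products and a quotient of the functions above; the quotient is harmless, since on $\{t\notin\mathcal R(d)\}$ one has $h(t)-g(t)>0$ by Lemma \ref{lem:well-defined}. The single genuinely delicate ingredient — and the step I expect to be the main obstacle — is the measurability of $(a,d)\mapsto 1_{\mathcal R(d)}(t)$, because $\mathcal R(d)=\{d(s):s\ge 0\}$ is a priori only an analytic subset of $\mathbb R$ and the condition ``$\exists s:\ d(s)=t$'' is a projection. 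I would resolve this using monotonicity and right-continuity of $d$: writing $e^-(t)=\inf\{s:d(s)\ge t\}$ one checks that $d(e^-(t))\ge t$ always, and that $d(e^-(t))=t$ if and only if $t\in\mathcal R(d)$; since $d\mapsto d(e^-(t))$ is measurable by the toolbox, so is the indicator. This closes the reduction and completes the argument.
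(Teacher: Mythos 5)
Your proof is correct and follows essentially the same route as the paper: both reduce measurability to the fixed-$t$ evaluations $(a,d)\mapsto x(t),y(t),g(t),h(t)$ via the fact that finite-dimensional evaluations generate the Borel $\sigma$-field, both obtain these by discretizing a time argument and passing to limits along monotone rational sequences, and both isolate the measurability of $\{(a,d): t\in\mathcal R(d)\}$ as the one non-routine ingredient before assembling $w(t)$ by cases. The only real deviations are cosmetic: you argue directly what the paper delegates to citations (measurability of the composition map and of $d\mapsto e$), and you test $t\in\mathcal R(d)$ by the equivalent condition $d(e^-(t))=t$ with $e^-(t)=\inf\{s: d(s)\ge t\}$, where the paper uses the countable expression $\bigcap_{n}\bigcup_{q\in\mathbb Q}\{d(q)\in[t,t+n^{-1}]\}$ --- both are valid.
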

\begin{proof}
We first note that the composition mapping is measurable, as shown in \cite[p.232]{Billingsley1968}, and hence the two mappings
$D(\Rd) \times D_{u,\uparrow} \ni (a,d) \mapsto y = a \circ e \in D(\Rd)$
and
$D(\Rd) \times D_{u,\uparrow} \ni (a,d) \mapsto h = d \circ e \in D(\Rd)$
are measurable.
Next, we show that the mapping $D(\Rd) \times D_{u,\uparrow} \ni (a,d) \mapsto x = (a^- \circ e^-)^+ \in D(\Rd)$ is measurable.
Since the finite-dimensional sets in $D(\Rd)$ generate its Borel $\sigma$-field, it suffices to show that for every $t \ge 0$, the mapping $(a,d) \mapsto x(t)$ is measurable.
For a sequence $t_n \downarrow t$, we have $x(t) = \lim a^-(e^-(t_n))$, and hence it suffices to show that the mapping $(a,d) \mapsto a^-(e^-(t))$ is measurable for every $t \ge 0$.
For $k \in \mathbb N$, let $e^-_k(t)$ be the largest ratio $i/k$ not larger than $e^-(t)$; we then have $\lim_{k \to \infty} a^-(e^-_k(t)) = a^-(e^-(t))$, and so it suffices to show that for every $t \ge 0$ and $k \in \mathbb N$, the mapping $(a,d) \mapsto a^-(e^-_k(t))$ is measurable. Fix $k$ and $t$, and let $B$ be a Borel set in $\Rd$.
Then the set $\{(a,d): a^-(e^-_k(t)) \in B\}$ is the union of
$\{e^-_k(t) = 0\} \cap \{a(0) \in B\}$
with the sets
\begin{align*}
 \left\lbrace (a,d) : \frac{i}{k} \le e^-(t) < \frac{i+1}{k} \right\rbrace
 \cap \left\lbrace (a,d): a\left( \frac{i}{k} \right) \in B \right\rbrace,
\end{align*}
where $i$ is running through $\mathbb Z$. Since $d\mapsto e$ is measurable \cite[Th.13.6.1]{Whitt2010} and $e^-(t) = \lim e(t_n)$ where $t_n \uparrow t$, the mapping $d \mapsto e^-(t)$ is measurable, and hence the above sets lie in the Borel $\sigma$-field of $D(\Rd)$. This shows that $(a,d) \mapsto x$ is measurable, and similarly to the above one shows that the map
$D(\Rd) \times D_{u,\uparrow} \ni (a,d) \mapsto g = (d^- \circ e^-)^+$ is measurable.

Next, we note that due to the right-continuity of $d$, we have
\begin{align*}
 \{(a,d): t \in \mathcal R(d)\}
 = \bigcap_{n\in\mathbb N} \bigcup_{q \in \mathbb Q}\{(a,d): d(q) \in [t, t+n^{-1}]\}
\end{align*}
which is thus seen to be a Borel set in $D(\Rd)$.

Finally, on the set $\{t \notin \mathcal R(d)\}$, $w(t)$ is calculated from $x(t),y(t),g(t)$ and $h(t)$ via a sum, product and quotient; hence
the set $\{(a,d): w(t) \in B\}$ is seen to be a Borel set in $D(\Rd)$, since it
can be written as the union of
$\{t \in \mathcal R(d)\} \cap \{x(t) \in B\}$ with
$\{t \notin \mathcal R(d)\} \cap \{w(t) \in B\}$.
\end{proof}

We can now apply the continuous mapping theorem.

\begin{theorem} \label{th:limit}
Let $S(n)$ and $T(n)$ be the cumulative processes as in Section 2, and define the processes
\begin{align} \label{eq:AnDn}
 A_n(t) &= B(n) S(\lfloor nt \rfloor),
&D_n(t) &= b(n) T(\lfloor nt \rfloor)
\end{align}
where $B(n)$ and $b(n)$ are a spatial and a temporal scaling sequence.
As $n \to \infty$, suppose that $\{(A_n(t),D_n(t)\}_{t \ge 0}$ converges weakly in $\mathbb D(\spctim)$
with respect to the topology $J_1$ to the process $\{(A(t),D(t))\}_{t \ge 0}$.
Assume that $D(t)$ has strictly increasing sample paths a.s.
Then the rescaled L\'evy Walk $\{W_n(t)\}_{t \ge 0}$ given by
\begin{align*}
W_n(t) = B(n) W(t/b(n))
\end{align*}
converges weakly in $\mathbb D(\Rd)$ with respect to the $M_1$ topology to the limiting process $ \{ L(t) \}_{t \ge 0}$ given by
\begin{align} \label{eq:defL}
 L(t) &=
 \begin{cases}
  X(t)  &\text{ if } t \in \mathcal R(D) \\
  X(t) + \dfrac{t-G(t)}{H(t) - G(t)}(Y(t) - X(t))  &\text{ if } t\notin \mathcal R(D),
 \end{cases}
 \end{align}
 \begin{align*}
\mathcal R(D) &= \{D(t): t \ge 0\} \subset [0,\infty),
& E(t) &= \inf\{ r: D(r) > t\}, \\
 X(t) &= A^-(E(t+)),
&Y(t) &= A(E(t)), \\
 G(t) &= D^-(E(t+)),
&H(t) &= D(E(t)).
\end{align*}
\end{theorem}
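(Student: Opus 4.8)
The plan is to realize the rescaled L\'evy Walk $W_n$ as the image $\Phi(A_n,D_n)$ of the rescaled cumulative process under the path mapping $\Phi$ of \eqref{eq:levywalkpath}, and then to conclude by the continuous mapping theorem, using that $\Phi$ is Borel measurable (Lemma~\ref{lem:measurablePhi}) and continuous at every pair $(a,d)$ with $d$ strictly increasing (Proposition~\ref{prop:mapping}).

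First I would check that $(A_n,D_n)$ and $(A,D)$ are, almost surely, random elements of the domain $\mathbb D(\Rd)\times\mathbb D_{u,\uparrow}$ of $\Phi$, viewed as a subspace of $\mathbb D(\spctim)$: the path $A_n$ of \eqref{eq:AnDn} is a step function and hence r.c.l.l., while $D_n$ is non-decreasing by construction and unbounded because $T(n)=\sum_{i=1}^{n}J_i\to\infty$ a.s.\ (the $J_i$ being nonnegative and non-degenerate). Likewise $A$ is an r.c.l.l.\ L\'evy process and $D$ is a subordinator, so $(A,D)\in\mathbb D(\Rd)\times\mathbb D_{u,\uparrow}$ a.s., and $D$ is strictly increasing a.s.\ by hypothesis. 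Since $\mathbb D_{u,\uparrow}$ is a Borel subset of $\mathbb D(\R)$ and both the sequence and the limit concentrate on $\mathbb D(\Rd)\times\mathbb D_{u,\uparrow}$, the assumed $J_1$-convergence $(A_n,D_n)\Longrightarrow(A,D)$ in $\mathbb D(\spctim)$ restricts to weak convergence in $\mathbb D(\Rd)\times\mathbb D_{u,\uparrow}$ with the trace topology, which is exactly the setting of Proposition~\ref{prop:mapping}.

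The core of the argument is the pathwise identity $W_n=\Phi(A_n,D_n)$, which I would establish by a direct computation. Taking $a=A_n$, $d=D_n$ and writing $e,g,h,x,y$ for the paths associated with $(a,d)$, one sees that $d$ is the step path equal to $b(n)T(k)$ on $[k/n,(k+1)/n)$, that $e(t)=(N_{t/b(n)}+1)/n$, and consequently, with $s:=t/b(n)$ and $N:=N_{s}$, that $x(t)=B(n)S(N)$, $y(t)=B(n)S(N+1)$, $g(t)=b(n)T(N)$, $h(t)=b(n)T(N+1)$ when $t\notin\mathcal R(d)$, whereas the $(\cdot)^{+}$-operation gives $x(t)=B(n)S(N)=W_n(t)$ when $t\in\mathcal R(d)$. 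In the former case, substituting into \eqref{eq:levywalkpath} and using $S(N+1)-S(N)=J_{N+1}\Lambda_{N+1}$ and $T(N+1)-T(N)=J_{N+1}$, the interpolation coefficient reduces to $(s-T(N))/J_{N+1}$ and the whole expression to $B(n)\bigl(S(N)+(s-T(N))\Lambda_{N+1}\bigr)=B(n)W(s)=W_n(t)$; the initial interval $[0,b(n)T(1))$ is handled the same way. Comparing the definitions of $E,X,Y,G,H$ in the statement of Theorem~\ref{th:limit} with $e,x,y,g,h$ evaluated at $(a,d)=(A,D)$ shows in the same fashion that $\Phi(A,D)=L$.

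With both identifications in hand the proof is concluded by the continuous mapping theorem: $\Phi$ is Borel measurable on $\mathbb D(\Rd)\times\mathbb D_{u,\uparrow}$ by Lemma~\ref{lem:measurablePhi}, and by Proposition~\ref{prop:mapping} its set of discontinuity points is contained in the Borel set $\{(a,d):d\text{ is not strictly increasing}\}$, which is $\mathcal L(A,D)$-null by hypothesis; hence the measurable version of the continuous mapping theorem gives $\Phi(A_n,D_n)\Longrightarrow\Phi(A,D)$ weakly in $\mathbb D(\Rd)$ with respect to the $M_1$ topology, that is, $W_n\Longrightarrow L$. I expect the only genuinely laborious point to be the identity $W_n=\Phi(A_n,D_n)$: it is conceptually transparent but requires careful bookkeeping of the floor function, of the generalized inverse $e=d^{-1}$, of left limits and the $(\cdot)^{+}$-operation, and of the two scaling sequences $B(n)$ and $b(n)$, with separate treatment of $t\in\mathcal R(D_n)$ and of small $t$; everything else is an application of the already established structural properties of $\Phi$.
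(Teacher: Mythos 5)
Your proposal is correct and follows essentially the same route as the paper: the pathwise identification $W_n=\Phi(A_n,D_n)$ (and $L=\Phi(A,D)$) by direct computation with $E_n(t)=(N_{t/b(n)}+1)/n$, the restriction of the weak convergence to the Borel domain $\mathbb D(\Rd)\times\mathbb D_{u,\uparrow}$, and the measurable-mapping version of the continuous mapping theorem using Lemma~\ref{lem:measurablePhi} together with the fact that, by Proposition~\ref{prop:mapping}, the discontinuity set of $\Phi$ is contained in the null event that $D$ fails to be strictly increasing. No gaps.
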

Recall that the l.c.r.l.\ process $\{A^-(t)\}_{t \ge 0}$ is given by
$A^-(t) = A(t-)$ if $t > 0$ and $A^-(0) = A(0)$, and similarly for $D^-(t)$.

\begin{proof}
Our first step is to show $\Phi(A_n,D_n)(t) = W_n(t)$.
We define the processes
 \begin{align*}
E_n(t) &:= \inf\{r: D_n(r) > t\} = n^{-1}(N_{ t / b(n)} + 1)\\
G_n(t) &:= D_n^-(E_n^-(t+)) = b(n) T(N_{ t/b(n)}) \\
H_n(t) &:= D_n(E_n(t)) = b(n) T(N_{ t / b(n)} + 1) \\
X_n(t) &:= A_n^-(E_n^-(t+)) = B(n) S(N_{ t / b(n)}) \\
Y_n(t) &:= A_n(E_n(t)) = B(n) S(N_{t / b(n)} + 1).
\end{align*}
If $t \notin \mathcal R(D_n)$, then $G_n(t) < t < H_n(t)$, and
\begin{align*}
&\Phi(A_n,D_n)(t)
= X_n(t) + \frac{t-G_n(t)}{H_n(t) - G_n(t)} (Y_n(t) - X_n(t)) \\
&= B(n) S(N_{t / b(n)}) + \frac{t - b(n)T(N_{t / b(n)})}{b(n)J_{N(t/b(n))+1}}
B(n) J_{N(t/b(n))+1}\Lambda_{N(t/b(n))+1} \\
&= B(n) \left( S(N_{t/b(n)})
+ [t/b(n) - T(N_{t/b(n)})] \Lambda_{N(t/b(n))+1} \right) \\
&= W_n(t).
\end{align*}
If $t \in \mathcal R(D_n)$, then one finds
$t = b(n) T(k)$, for some $k \in \mathbb N_0$, and then
$t / b(n) = T(k)$ implies $k = N_{t / b(n)}$.
But then $t - T(N_{t/b(n)}) = 0$, and
\begin{align*}
W_n(t) = X_n(t) = \Phi(A_n,D_n)(t).
\end{align*}

Next, we check the assumptions of the Continuous Mapping Theorem,
\cite[p.30]{Billingsley1968}.
Endow $\mathbb D(\spctim)$ and $\mathbb D(\Rd)$ with their Borel $\sigma$-fields (it is irrelevant which of the topologies $J_1$ or $M_1$, see \cite{Whitt2010}).
% $\mathcal F(\mathbb D(\spctim))$ and $\mathcal F(\mathbb D(\Rd))$.
Let $\mathbf P_n$ and $\mathbf P$ denote the laws on $\mathbb D(\spctim)$
of the processes associated with \eqref{eq:AnDn} and $\{(A_t,D_t)\}_{t \ge 0}$.
Then our assumption reads as the weak convergence
$\mathbf P_n \Rightarrow \mathbf P$, as $n \to \infty$.
The domain of the path map $\Phi$ defined in \eqref{eq:levywalkpath}
is a Borel set of $\mathbb D(\spctim)$ \cite[Lem~2.1]{StrakaHenry}.
It is then straightforward to recast the topology, $\sigma$-field and probability measures $\mathbf P_n$ and $\mathbf P$ to the domain of $\Phi$.
We redefine these new probability measures again as $\mathbf P_n$ and $\mathbf P$, for ease of notation.
By \cite[Cor~3.2]{EthierKurtz}, $\mathbf P_n \Rightarrow \mathbf P$.
The mapping $\Phi$ is measurable by Lemma~\ref{lem:measurablePhi}.
The sample paths $D(t)$ are strictly increasing $\mathbf P$ a.s.
Hence by Proposition~\ref{prop:mapping} the set of discontinuities of $\Phi$ is a $\mathbf P$-null set.

The statement of the Continuous Mapping Theorem is then that
$\mathbf P_n \circ \Phi^{-1} \Rightarrow \mathbf P \circ \Phi^{-1}$, which is weak convergence in the codomain $\mathbb D(\Rd)$ with respect to the topology $M_1$.
We have checked earlier in this proof that $\mathbf P_n \Phi^{-1}$ is the law of the process $\{W_n(t)\}_{t \ge 0}$, and it is straightforward to see that $\mathbf P \Phi^{-1}$ is the law of the process $\{L(t)\}_{t \ge 0}$.
\end{proof}

\begin{cor}
\label{cor1}
Let $\beta \in (0,1)$.
Then as $c \to \infty$, the rescaled L\'evy Walk
\begin{align} \label{eq:rescaledLW01}
 \left\lbrace \frac{1}{c} W(ct) \right\rbrace_{t \ge 0}
\end{align}
converges  in $\mathbb D(\Rd)$ with respect to the topology $U$ of uniform convergence to the process $\{L(t)\}_{t \ge 0}$ given by \eqref{eq:defL} and the L\'evy process $\{(A(t),D(t))\}_{t \ge 0}$ with symbol $\psi(k,s)$ as in \eqref{eq5}.
\end{cor}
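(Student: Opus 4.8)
The plan is to deduce Corollary~\ref{cor1} from Theorem~\ref{th:limit} by choosing the scaling sequences correctly, and then to perform two routine softenings: upgrading $M_1$-convergence to uniform convergence, and passing from the discrete scaling sequence to the continuous parameter $c$. For the first part, I would take both the spatial and the temporal scaling sequences in \eqref{eq:AnDn} equal to the norming sequence $b_n$ of \eqref{eqm1}, i.e.\ $B(n)=b(n)=b_n$. Then $A_n(t)=b_nS(\lfloor nt\rfloor)$ and $D_n(t)=b_nT(\lfloor nt\rfloor)$, and specialising \eqref{eqP1} to integer $c=n$ gives $(A_n,D_n)\Rightarrow(A,D)$ in $(\mathbb D(\spctim),J_1)$, where $(A(t),D(t))$ is the L\'evy process with symbol \eqref{eq5}. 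Since $\beta\in(0,1)$, $D$ is a $\beta$-stable subordinator with no drift and infinite L\'evy measure, so it has a.s.\ strictly increasing sample paths, which verifies the remaining hypothesis of Theorem~\ref{th:limit}. The theorem then yields $W_n(t)=b_n\,W(t/b_n)\Rightarrow L(t)$ in $(\mathbb D(\Rd),M_1)$ with $L$ as in \eqref{eq:defL}; writing $c_n:=1/b_n\to\infty$, we have $W_n(t)=c_n^{-1}W(c_nt)$ identically, so $\{c_n^{-1}W(c_n\cdot)\}\Rightarrow L$ in $(\mathbb D(\Rd),M_1)$.

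Next I would check that $L$ has a.s.\ continuous paths. By \eqref{eq2} the L\'evy measure of $(A,D)$ is supported on $S=\{(x,t):\|x\|=t\}$, so a jump of $A$ occurs exactly when $D$ jumps, and of the same magnitude; hence $\disc(A)=\disc(D)$ a.s., in particular $\disc(A)\subset\disc(D)$. Together with the strict monotonicity of $D$, Lemma~\ref{lem:wcont} applied pathwise to $(a,d)=(A(\omega),D(\omega))$ shows $L\in C([0,\infty),\Rd)$ a.s. Since a sequence in $\mathbb D(\Rd)$ that converges in the $M_1$ topology to a \emph{continuous} limit converges to it uniformly on compacts (see \cite{Whitt2010}), the weak convergence from the previous step improves to $\{c_n^{-1}W(c_n\cdot)\}\Rightarrow L$ in $(\mathbb D(\Rd),U)$.

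To reach arbitrary $c\to\infty$, note that because $b_n$ is regularly varying with index $-1/\beta$, the sequence $c_n=1/b_n$ is regularly varying with index $1/\beta$, so $c_{n+1}/c_n\to1$. Given any $c_k\to\infty$, pick $n_k$ with $c_{n_k}\le c_k<c_{n_k+1}$, so $n_k\to\infty$ and $c_{n_k}/c_k\to1$. From \eqref{eqm2} the path $W$ is $1$-Lipschitz and satisfies $\|W(s)\|\le s$, hence for every $T>0$
\[
\sup_{0\le t\le T}\bigl\|c_k^{-1}W(c_kt)-c_{n_k}^{-1}W(c_{n_k}t)\bigr\|\le 2T\bigl(1-c_{n_k}/c_k\bigr)\longrightarrow0
\]
deterministically. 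Since $\{c_{n_k}^{-1}W(c_{n_k}\cdot)\}$ is a subsequence of the sequence above it still converges weakly to $L$ in $(\mathbb D(\Rd),U)$, and the converging-together lemma (cf.\ \cite{Billingsley1968}) then gives $\{c_k^{-1}W(c_k\cdot)\}\Rightarrow L$ in $(\mathbb D(\Rd),U)$. As $c_k\to\infty$ was arbitrary, \eqref{eq:rescaledLW01} converges to $L$ as claimed.

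I expect the only genuinely delicate point to be the passage from $M_1$- to $U$-weak-convergence: $(\mathbb D(\Rd),U)$ is not separable, so this should be justified carefully, e.g.\ by a Skorokhod representation for the $M_1$-convergence (which is Polish), using that $M_1$-convergence of the representatives to the a.s.-continuous limit $L$ is automatically uniform on compacts, and then transferring back. The identity $\disc(A)=\disc(D)$, which forces $L$ to be continuous, is exactly the feature special to the regime $\beta\in(0,1)$ that makes a statement in the uniform topology possible at all.
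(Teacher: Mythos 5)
Your proposal is correct and follows essentially the same route as the paper: apply Theorem~\ref{th:limit} with $B(n)=b(n)=b_n$, invoke Lemma~\ref{lem:wcont} (whose hypotheses you rightly verify via the strict increase of the $\beta$-stable subordinator $D$ and the identity $\disc(A)=\disc(D)$ forced by the L\'evy measure living on $\{\|x\|=t\}$) to get a.s.\ continuity of $L$ and hence convergence in $U$, and then use regular variation of $b$ to pass to the scaling $\frac1c W(ct)$. The only cosmetic difference is in the last step, where the paper substitutes an asymptotic inverse $\tilde b(c)$ with $b(\tilde b(c))^{-1}\sim c$ while you run a converging-together argument off the $1$-Lipschitz property of $W$; these amount to the same thing, and your version makes explicit a point the paper leaves implicit.
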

\begin{proof}
In view of \eqref{eqP1} we have 
\begin{align*}
 \left\lbrace b(c) \left( S(\lfloor ct \rfloor), T(\lfloor ct \rfloor) \right) \right\rbrace_{t \ge 0}
 \stackrel{J_1}{\longrightarrow}
 \{(A(t),D(t))\}_{t \ge 0}
\end{align*}
as $c\to\infty$. 
Then by Theorem \ref{th:limit}, the process $\{W_c(t)\}_{t \ge 0}$ given by
\begin{align}\label{eqCC1}
 W_c(t) = b(c) W(b(c)^{-1}t)
\end{align}
converges to $\{L(t)\}_{t \ge 0}$ with respect to the topology $M_1$.
But the sample paths of the limiting process $\{L(t)\}_{t \ge 0}$ are a.s.\ continuous by Lemma~\ref{lem:wcont}, and hence convergence also holds in the stronger topology $U$.
Since $b(c)^{-1}$ is regularly varying with index $1/\beta$ there exists a function $\tilde b(c)$ regularly varying with index $\beta$ such that $b(\tilde b(c))^{-1}\sim c$ as $c\to\infty$.  Now replace $c$ by $\tilde b(c)$ in \eqref{eqCC1} and the result follows.
\end{proof}
\begin{remark}
The scaling limit $L(t)$ in the above corollary is 1-selfsimilar.
Thus, its variance in the one-dimensional symmetric case equals
$\text{Var}(L(t))=t^2 E[L^2(1)]$. We have analogous asymptotic behavior of the variance for the corresponding L\'evy walk process $\text{Var}(W(t))\propto t^2$ as
$t\rightarrow\infty$, see \cite{Klafter2}. Such quadratic in time scaling of the variance is typical for
\emph{ballistic motion}. 
This kind of motion is characteristic for a Brownian particle at the early stage of its movement right after a collision, 
due to the inertia of the particle \cite{Huang}.
After this initial stage, a Brownian particle makes the transition from ballistic to diffusive (linear in time growth of variance) regime.
\end{remark}

\begin{remark}
Note that the trajectories of the limit process $L(t)$ in the above corollary are \emph{continuous}. Thus,
it is a different process than the one obtained as a limit of the corresponding CTRW (see Example 5.4. in \cite{coupleCTRW}).
The difference is even more evident in the particular one-dimensional case with $X_i = J_i$ (jumps equal to waiting times).
Then, the L\'evy walk is just a deterministic linear function $W(t)=t$. So is its scaling limit $L(t)=t$.
However, the scaling limit of the corresponding CTRW is a jump process
with one-dimensional distribution given by the generalized arcsine (Beta) law \cite{coupleCTRW}.
\end{remark}

\begin{cor}
Let $\beta \in (1,2)$, and assume that the distribution of $J$ belongs to the domain of attraction of a $\beta$-stable law. Let $\mu = \mathbf E[J]$. Assume further that $E[\Lambda_i]=0$. Then there exists a scaling function $b(c)$ such that, as $c \to \infty$, the rescaled L\'evy Walk
\begin{align} \label{eq:rescaledLW12}
 \left\lbrace b(c) W(\mu ct) \right\rbrace_{t \ge 0}
\end{align}
converges in $\mathbb D(\Rd)$ with respect to the $M_1$ topology to the
$\beta$-stable process $\{A(t)\}_{t \ge 0}$ with spectral measure $\lambda$.
\end{cor}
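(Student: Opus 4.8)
We reduce the statement to Theorem~\ref{th:limit} applied with a \emph{deterministic} temporal limit, so the first task is to record the scaling limit of the spatial cumulative process alone. The summands $J_i\Lambda_i$ are i.i.d.\ $\rd$-valued vectors whose Euclidean norm $\|J_i\Lambda_i\|=J_i$ has a regularly varying tail of index $-\beta$ and whose direction $J_i\Lambda_i/\|J_i\Lambda_i\|=\Lambda_i\sim\lambda$ is independent of that norm. Since $\beta>1$ we have $\mathbf E[J_1]=\mu<\infty$, hence $\mathbf E[J_i\Lambda_i]=\mathbf E[J_i]\,\mathbf E[\Lambda_i]=\mu\cdot 0=0$. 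Running the radial computation from the proof of Theorem~\ref{thm1} on the single vector $J_1\Lambda_1$ (discarding the time coordinate) identifies its tail behaviour with the Lévy measure $\phi_A$ of \eqref{eq3}; together with the classical limit theorem for i.i.d.\ sums in the domain of attraction of a multivariate stable law \cite{thebook} (the vanishing mean pins down the location, so the limit is the $\beta$-stable law with symbol $\psi_A(k)=\psi(k,0)$ from Lemma~\ref{lem1}) this yields a regularly varying scaling sequence $b(n)$ of index $-1/\beta$ with $b(n)S(n)\Longrightarrow A$, and then, by Theorem~16.14 in \cite{kallenberg} exactly as for \eqref{eqP1}, $\{b(n)S(\lfloor nt\rfloor)\}_{t\ge0}\overset{J_1}{\longrightarrow}\{A(t)\}_{t\ge0}$.

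For the temporal part we take the temporal scaling equal to $(\mu n)^{-1}$. Since $\mu<\infty$, the functional law of large numbers gives $(\mu n)^{-1}T(\lfloor nt\rfloor)\to t$ uniformly on compact time intervals, almost surely, hence also in $\mathbb D(\R)$ with respect to $J_1$. The key point is the \emph{joint} convergence
\[
\bigl\{\bigl(b(n)S(\lfloor nt\rfloor),\,(\mu n)^{-1}T(\lfloor nt\rfloor)\bigr)\bigr\}_{t\ge0}\;\Longrightarrow\;\bigl\{(A(t),t)\bigr\}_{t\ge0}\quad\text{in }\mathbb D(\spctim)\text{ with }J_1 .
\]
This is the one delicate step, since $J_1$-convergence on the path space $\mathbb D(\spctim)$ is strictly stronger than joint $J_1$-convergence of the two coordinates; it is rescued by the temporal limit $t\mapsto t$ being continuous and deterministic. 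Concretely, by the Skorokhod representation realize $b(n)S(\lfloor n\cdot\rfloor)\to A$ a.s.\ in $J_1$, with time changes $\gamma_n\to\id$ uniformly on compacts; as $(\mu n)^{-1}T(\lfloor n\cdot\rfloor)\to\id$ uniformly and $\id$ is uniformly continuous on compacts, the composition $(\mu n)^{-1}T(\lfloor n\gamma_n(\cdot)\rfloor)\to\id$ uniformly as well, so the pair composed with $\gamma_n$ converges uniformly to $(A,\id)$, which is $J_1$-convergence in $\mathbb D(\spctim)$. (Equivalently, the $\R$-coordinate converges in probability to a constant path, so the pair converges weakly in $\mathbb D(\Rd,J_1)\times\mathbb D(\R,U)$, and the ``merge coordinates'' map is continuous at every point whose $\R$-component is continuous.)

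We now invoke Theorem~\ref{th:limit} with spatial scaling $B(n)=b(n)$ and temporal scaling (the ``$b(n)$'' of that statement) equal to $(\mu n)^{-1}$: the limiting process $(A(t),D(t))$ has $D(t)=t$, whose sample paths are deterministically—hence almost surely—strictly increasing, so the hypotheses hold and $W_n(t):=b(n)W(\mu n t)\overset{M_1}{\longrightarrow}L(t)$ with $L$ as in \eqref{eq:defL}. It remains to identify $L$. Since $D(t)=t$ we have $\mathcal R(D)=[0,\infty)$, so the first branch of \eqref{eq:defL} applies for every $t$ and $L(t)=X(t)$; moreover $E(t)=\inf\{r:r>t\}=t$, so $E=\id$ is continuous and $X=(A^-\circ E^-)^+=(A^-)^+=A$, because the r.c.l.l.\ modification of the l.c.r.l.\ path $A^-$ is $A$ itself. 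Hence $L=A$, the $\beta$-stable process with spectral measure $\lambda$.

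Finally we pass from the integer index to a real scaling function, exactly as at the end of the proof of Corollary~\ref{cor1}: set $b(c):=b_{\lfloor c\rfloor}$. Because the L\'evy walk has speed $1$, its sample paths are $1$-Lipschitz, so $\sup_{t\le T}\|b(c)W(\mu c t)-b(c)W(\mu\lfloor c\rfloor t)\|\le b_{\lfloor c\rfloor}\,\mu T\to0$ as $c\to\infty$, while $\{b_{\lfloor c\rfloor}W(\mu\lfloor c\rfloor t)\}_{t\ge0}$ converges to $A$ in $M_1$ along $c\to\infty$ because it does so along integers. Since the $M_1$ metric is dominated by the uniform metric, a converging-together argument gives $\{b(c)W(\mu c t)\}_{t\ge0}\overset{M_1}{\longrightarrow}\{A(t)\}_{t\ge0}$, which is the assertion. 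The only genuine obstacle is the joint functional convergence in $\mathbb D(\spctim)$ of the second paragraph; the remainder is bookkeeping with the $(\cdot)^{\pm}$ operations together with the regular-variation interpolation already used for $\beta\in(0,1)$.
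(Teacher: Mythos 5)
Your proposal is correct and follows essentially the same route as the paper: establish $b(c)S(\lfloor ct\rfloor)\overset{J_1}{\longrightarrow}A(t)$ from the tail/spectral-measure computation and the centering $E[J\Lambda]=0$, get $(\mu c)^{-1}T(\lfloor ct\rfloor)\overset{J_1}{\longrightarrow}t$ from the law of large numbers, combine them into joint $J_1$-convergence because the temporal limit is deterministic and continuous, and apply Theorem~\ref{th:limit}. You merely supply more detail than the paper at two points the authors leave implicit — the upgrade from marginal to joint functional convergence, and the identification $L=A$ when $D(t)=t$ — both of which you handle correctly.
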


\begin{proof} 
By assumption on $J$ there exists a regularly varying function $b(c)$ with index $-1/\beta$ such that 
$b(c)T(\lfloor c\rfloor)-cb(c)\mu\Longrightarrow D$ as $c\to\infty$ for some $\beta$-stable random variable $D$. 
Moreover, for any $u>0$ and Borel sets $V\subset\Sd$ we have 
\begin{multline*}
cP\{b(c)J\Lambda \in B(u,V)\} = cP\{b(c)J>u, \Lambda\in V\}\\ =cP\{b(c)J>u\}\lambda(V)\to \phi_D(u,\infty)\lambda(V)
\end{multline*}
 where $\phi_D$ is the L\'evy measure of $D$. Since $E[J \Lambda]=E[J]\cdot E[\Lambda]=0$ this implies that
 $b(c)S(\lfloor c\rfloor)\Longrightarrow A$, where $A$ is multivariate $\beta$-stable with spectral measure $\lambda$.
 Using Theorem 16.14 in \cite{kallenberg} this implies 
 \[ \bigl\{b(c)S(\lfloor ct\rfloor)\bigr\}_{t\geq 0}\overset{J_1}{\longrightarrow} \bigl\{A(t)\bigr\}_{t\geq 0}. \]
 Moreover, by the strong law of large numbers we have $c^{-1}T(\lfloor c\rfloor)\to\mu$ almost surely and hence, using Theorem 16.14 in \cite{kallenberg} again we have 
 \[ \frac 1{\mu c}T(\lfloor ct\rfloor)\overset{J_1}{\longrightarrow} \{ t\}_{t\geq 0}. \]
 Since the latter limit process is deterministic we get
\begin{align*}
 \left\lbrace \left( b(c) S(\lfloor ct \rfloor), (\mu c)^{-1} T(\lfloor ct \rfloor) \right) \right\rbrace_{t \ge 0} \stackrel{J_1}{\longrightarrow} \left\lbrace (A(t), t) \right\rbrace_{t \ge 0}.
\end{align*}
Theorem~\ref{th:limit} then yields the statement.
\end{proof}

\begin{remark}[The superdiffusive regime]
For $\beta \in (1,2)$, the variance of the L\'evy walk grows as $\text{Var}(W(t))\propto t^{3-\beta}$ as $t\rightarrow\infty$, see \cite{Klafter2}. 
This is the so-called \emph{superdiffusive regime}. However, the second moment of the limit process $A(t)$
is infinite. Thus, the situation is different from the previously analyzed case $\beta \in (0,1)$, where the ballistic regime was observed both for L\'evy walk and its scaling limit.
\end{remark}

\begin{remark}[Pathwise variation]
From the definition of the L\'evy Walk, it can be seen that the variation of almost every path $[0,t] \to W_t$
equals $t$. 
For the rescaled L\'evy Walk \eqref{eq:rescaledLW12} in the case 
$\beta \in (1,2)$, 
the variation at scale $c$ equals $b(c)\,\mu c t$. 
As the scaling function $b(c)$ is of regular variation with exponent $-1/\beta$, this is seen to grow indefinitely as $c \to \infty$, with approximate speed
$c^{1-1/\beta}$ as $c \to \infty$. This rings well with the limit process being a stable process, which is known to have sample paths with infinite variation almost surely.

In the case $\beta \in (0,1)$, the rescaled L\'evy Walk 
\eqref{eq:rescaledLW01} almost surely has sample paths with variation $t$ at all scales. 
We hence conjecture that the limiting process $L(t)$ also has sample paths with variation $t$ on the interval $[0,t]$.
\end{remark}

\section{The governing equation}

This section develops the governing equation of the one-dimensional marginals for the L\'evy walk scaling limit process $\{L(t)\}_{t\geq 0}$ obtained in Corollary \ref{cor1} as well as its solution. As in \cite{M3HPStri} and \cite{Jurlewicz}, we consider pseudo-differential equations in the variables $x$ and $t$, and solutions are defined in the ``mild'' sense. 
Our first step is to express the law of the L\'evy Walk Limit in terms of its ``model parameters'' $\beta \in (0,1)$ and $\lambda$.
For abbreviation, we introduce the $0$-potential density 
$$u(x,t) = \int_0^\infty p_u(x,t)\,du,$$
where $p_u(x,t)$ is the probability density of $(A_u,D_u)$, see Proposition \ref{psupp}.
%$U(dx,dt)$ of the process $(A_u,D_u)$, defined by
%\begin{align}
%\iint f(x,t) U(dx,dt) = \int_0^\infty E[f(A_u,D_u)] \, du.
%\end{align}
The $0$-potential 
$U(dx,dt) = u(x,t)\,dx\,dt$
may be interpreted as the measure which assigns to a Borel set $C \subset \spctim$ the expected amount of time the process $(A_u,D_u)$ stays in $C$.
We note that $D_u$ is a subordinator, and hence $(A_u,D_u)$ is a transient Markov process, guaranteeing that the measure $U(dx, dt)$ is $\sigma$-finite \cite{chung2005markov}. 
Its Fourier-Laplace transform is given by $1/\psi(k,s)$.

\begin{theorem}
The law of the L\'evy Walk Limit process \eqref{eq:defL} with 
tail parameter $\beta \in (0,1)$ and spectral measure $\lambda$ is Lebesgue absolutely continuous with density 
%\begin{align} \label{eq:levy-law}
%\int\limits_{\rd} f(\ell) \rho_t(d\ell) = \int\limits_{\rd} \int\limits_0^t U(dx,dg) \frac{(t-g)^{-\beta}}{\Gamma(1-\beta)}
%\int\limits_{S^{m-1}} \Lambda(d\theta) f(x+(t-g)\theta)
%\end{align}
\begin{align} \label{eq:levy-law}
\rho_t(x) = \int\limits_{S^{m-1}} \int\limits_0^t 
u(x-(t-g)\theta,g) \frac{(t-g)^{-\beta}}{\Gamma(1-\beta)}
\,dg\, \Lambda(d\theta).
\end{align}
\end{theorem}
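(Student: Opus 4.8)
The plan is to fix $t>0$ and compute the law of $L(t)$ by first writing $L(t)$ as an explicit functional of the L\'evy process $(A,D)$ observed just before it first crosses the level $t$ in its time coordinate, and then evaluating the resulting expectation with the compensation formula (L\'evy system) for $(A,D)$, using that its $0$-potential is $U(dx,dt)=u(x,t)\,dx\,dt$. Since $D$ is a strictly increasing, driftless $\beta$-stable subordinator, a.s.\ the level $t$ falls strictly inside one of its jump intervals: with $r^\ast:=E(t)=\inf\{r:D(r)>t\}$ we have $D(r^\ast-)<t<D(r^\ast)$ (equivalently $t\notin\mathcal R(D)$ and $G(t)<t<H(t)$; this uses that the stable subordinator's potential measure is nonatomic and its undershoot law is absolutely continuous). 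As $E=D^{-1}$ is right-continuous, $E(t+)=E(t)=r^\ast$, so the second line of \eqref{eq:defL} applies with $X(t)=A(r^\ast-)$, $Y(t)=A(r^\ast)$, $G(t)=D(r^\ast-)$, $H(t)=D(r^\ast)$. By Theorem~\ref{thm1} the L\'evy measure $\phi$ of $(A,D)$ is carried by $\{(x,s):\|x\|=s\}$, so a.s.\ every jump of $(A,D)$ has the form $(\rho\theta,\rho)$ with $\rho>0$, $\theta\in\Sd$; in particular the jump straddling level $t$ satisfies $A(r^\ast)-A(r^\ast-)=\rho^\ast\Theta^\ast$ and $D(r^\ast)-D(r^\ast-)=\rho^\ast=H(t)-G(t)$ for some $\Theta^\ast\in\Sd$. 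Substituting into \eqref{eq:defL} and cancelling $\rho^\ast$ gives the clean identity
\[ L(t)=A(r^\ast-)+\bigl(t-D(r^\ast-)\bigr)\,\Theta^\ast, \]
which depends on the straddling jump only through its direction $\Theta^\ast$ and not through its length $\rho^\ast$.

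Now let $\varphi$ be bounded and continuous on $\rd$. Applying the compensation formula to the L\'evy process $(A,D)$ in $\spctim$, the (single) term in the sum over jump times that picks out the first jump carrying $D$ above $t$ is rewritten as an integral against $dr\,\phi(d\zeta)$; the expectation in the pre-jump variable $(A(r-),D(r-))$ then integrates against the $0$-potential $U(dx,dg)=u(x,g)\,dx\,dg$ restricted to $\{g\le t\}$, while the straddling jump $\zeta$ is governed by $\phi$ restricted to $\{g+\rho>t\}$. Using the polar form $\phi(dr,d\theta,ds)=\eps_s(dr)\,\lambda(d\theta)\,\phi_D(ds)$ from \eqref{eq2} (so the radial jump equals the temporal jump, $\rho=s$) together with the identity above, and observing that the integrand no longer involves $\rho$, we obtain
\[ \E\bigl[\varphi(L(t))\bigr]=\int_{\Sd}\int_0^t\int_{\rd}\varphi\bigl(x+(t-g)\theta\bigr)\,u(x,g)\,\phi_D\bigl((t-g,\infty)\bigr)\,dx\,dg\,\lambda(d\theta). \]
Fubini is legitimate here because $U$ restricted to $\{g\le t\}$ is a finite measure: its total mass equals $\E[E(t)]=t^\beta/\Gamma(1+\beta)<\infty$.

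By \eqref{eq0} we have $\phi_D\bigl((t-g,\infty)\bigr)=(t-g)^{-\beta}/\Gamma(1-\beta)$, and substituting $y=x+(t-g)\theta$ in the $dx$-integral yields
\[ \E[\varphi(L(t))]=\int_{\rd}\varphi(y)\left(\int_{\Sd}\int_0^t u\bigl(y-(t-g)\theta,g\bigr)\frac{(t-g)^{-\beta}}{\Gamma(1-\beta)}\,dg\,\lambda(d\theta)\right)dy, \]
which identifies the law of $L(t)$ with the density $\rho_t$ of \eqref{eq:levy-law}, and in particular shows it is Lebesgue absolutely continuous. (As a consistency check, $\int_{\rd}u(\cdot,g)\,dx$ is the potential density $g^{\beta-1}/\Gamma(\beta)$ of the $\beta$-stable subordinator, and $\frac{1}{\Gamma(\beta)\Gamma(1-\beta)}\int_0^t g^{\beta-1}(t-g)^{-\beta}\,dg=1$, so $\rho_t$ integrates to one.)

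The main obstacle is the rigorous execution of the compensation-formula step: one must verify, via the strong Markov property and the absence of creeping, that the pre-passage state $(A(r^\ast-),D(r^\ast-))$ has intensity exactly $U(dx,dg)\,\mathbf 1_{\{g\le t\}}$ and that, conditionally on it, the straddling jump is a fresh sample from $\phi$ subject only to the constraint $\{g+\rho>t\}$, along with the attendant $\sigma$-finiteness bookkeeping (here the finiteness of $U$ on $\{g\le t\}$ is what makes everything clean). The pathwise reduction of $L(t)$ in the first paragraph and the change of variables in the last are routine.
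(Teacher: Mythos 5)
Your proposal is correct and follows essentially the same route as the paper: both arguments reduce the computation to the joint law $\pr\left((X_t,G_t,Y_t,H_t)\in(dx,dg,dy,dh)\right)=\boldsymbol 1(g<t<h)\,u(x,g)\,\phi(dy-x,dh-g)\,dx\,dg$ of the pre-passage state and the straddling jump, and then exploit the polar form \eqref{eq2} of $\phi$ so that the interpolation fraction collapses to $(t-g)\theta$, after which integrating out the jump length gives $\phi_D((t-g,\infty))=(t-g)^{-\beta}/\Gamma(1-\beta)$. The only difference is that the step you flag as the main obstacle --- establishing that joint law via the compensation formula --- is in the paper simply cited from Straka and Henry (Theorem 4.9 of that reference) rather than re-derived.
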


\begin{proof}
First, we note that for every $t > 0$ one has $\pr(t \in \mathcal R(D)) = 0$, since the subordinator $D$ is strictly increasing with zero drift \cite{Bertoin04}.
The joint law of $X_t, G_t, Y_t$ and $H_t$ can be easily read off from \cite[Theorem 4.9]{StrakaHenry} (there, the joint law of $X_t$, $t-G_t$, $Y_t$ and $H_t - t$ is given): 
\begin{align}
\E[f(X_t,G_t,Y_t,H_t)]
= \int\limits_\rd \int\limits_0^t u(x,g)
\int\limits_\rd \int\limits_{t-g}^\infty 
f(x,g,x+z,g+w)\,\phi(dz,dw)\,dg\,dx,
\end{align}
which in somewhat more intuitive notation reads
\begin{align}
\pr\left( (X_t, G_t, Y_t, H_t) \in (dx, dg, dy, dh) \right)
= \boldsymbol 1(g < t < h) u(x,g) \phi (dy-x, dh-g) \,dx\,dg
\end{align}
We can hence use \eqref{eq:defL} to calculate 
\begin{align*}
&\E[f(L_t)] 
= \E\left[f\left (X_t + \frac{t-G_t}{H_t - G_t} (Y_t - X_t)\right )\right] \\
&= \int\limits_\rd \int\limits_0^t u(x,g) 
\int\limits_\rd \int\limits_{t-g}^\infty
f\left (x + \frac{t-g}{w}z\right )\,\phi(dz,dw)\,dg\,dx \\
&= \int\limits_\rd \int\limits_0^t u(x,g) 
\int\limits_0^\infty \frac{\beta r^{-1-\beta}}{\Gamma(1-\beta)}
\int\limits_{S^{m-1}} 
\boldsymbol 1(t < g + r)
f\left (x+\frac{t-g}{r} r\theta\right )
\,\Lambda(d\theta)\,dr\,dg\,dx
\end{align*}
from which  \eqref{eq:levy-law} follows.
\end{proof}

It is now easy to calculate the Fourier-Laplace transform of the law of $L_t$, at first with $\theta$ held fixed (or rather $\lambda$ concentrated at $\theta$):
\begin{align*}
&\int_0^\infty  \E[\exp(i\langle k,L_t \rangle)] e^{-st} \,dt
\\
&=\int_0^\infty \int_\rd \int_0^t u(x,g) \frac{(t-g)^{-\beta}}{\Gamma(1-\beta)} e^{-st} \exp(i\langle k, x+(t-g)\theta \rangle) \,dg\,dx\,dt
\\ 
&= \int_\rd \int_0^\infty u(x,g) \int_g^\infty \frac{(t-g)^{-\beta}}{\Gamma(1-\beta)} e^{-st} \exp(i\langle k, x+(t-g)\theta \rangle)
\,dt\,dg\,dx
\\ 
&= \int_\rd \int_0^t u(x,g) \int_0^\infty 
\frac{v^{-\beta}}{\Gamma(1-\beta)} e^{-s(g+v)} \exp(i\langle k, x+v\theta \rangle)
\,dv\,dg\,dx
\\ 
&= \int_\rd \int_0^t u(x,g) \exp(i\langle k,x\rangle - sg)
	\int_0^\infty \exp(-(s-i\langle k, \theta \rangle)v) \frac{v^{-\beta}}{\Gamma(1-\beta)}
\,dv\,dg\,dx
\\ 
&= \frac{1}{\psi(k,s)} (s-i\langle k, \theta \rangle)^{\beta-1}
\end{align*}
For general $\lambda$, we then have 
\begin{align} \label{eq:levy-FLT}
\int_0^\infty dt\, e^{-st} \E[\exp(i\langle k,L_t \rangle)]
= \frac{\int_{S^{m-1}}  (s-i\langle k,\theta\rangle)^{\beta-1}\lambda(d\theta)}{\psi(k,s)}
\end{align}

Now recall from \cite{M3HPStri} and \cite{Jurlewicz} that, given a weakly measurable family $h(dx,t)\ (t\geq 0)$ of bounded measures on $\rd$, we say that a family $\rho_t(dx)$ of probability measures is the {\it mild solution} to the pseudo-differential equation $\psi(i\nabla_x,\partial_t)\rho_t(dx)=h(dx,t)$ if its Fourier-Laplace transform (FLT) solves the corresponding algebraic equation. 
In this sense, we can prove:
\begin{theorem}
The density $\rho_t(x)$ of the L\'evy Walk limit process satisfies the following pseudo-differential equation on $\spctim$:
\begin{align}
\label{eq52}
\int_{\Sd}\bigl(\partial_t+\ip\theta{\nabla_x}\bigr)^\beta\rho_t(x)\ \lambda(d\theta)
=\frac{t^{-\beta}}{\Gamma(1-\beta)} \delta(\|x\|-t)\lambda\left(d\frac x{\|x\|}\right)
\end{align}
\end{theorem}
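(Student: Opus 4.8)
The plan is to verify that the density $\rho_t(x)$ given explicitly by \eqref{eq:levy-law} is a mild solution of \eqref{eq52}, i.e.\ that its Fourier--Laplace transform satisfies the algebraic equation obtained by applying the symbol $\psi(k,s)$ to the FLT of $\rho_t$ and matching it with the FLT of the right-hand side. Thus the proof reduces to two computations: first, the FLT of the left-hand side of \eqref{eq52}, and second, the FLT of the right-hand side.

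For the left-hand side, I would invoke \eqref{eqps1}, which identifies $\int_{\Sd}(\partial_t + \ip\theta{\nabla_x})^\beta \rho_t(x)\,\lambda(d\theta)$ as the action of the pseudo-differential operator with symbol $\psi(k,s)$ from \eqref{eq5}. Hence its FLT is $\psi(k,s)$ times the FLT of $\rho_t$, which was already computed in \eqref{eq:levy-FLT} to be $\int_{\Sd}(s-i\ip k\theta)^{\beta-1}\lambda(d\theta)/\psi(k,s)$. Multiplying, the left-hand side has FLT
\begin{align*}
\int_{\Sd}(s-i\ip k\theta)^{\beta-1}\,\lambda(d\theta).
\end{align*}
One small technical point to address here is that $\rho_t(x)$ is supported on the cone $\{\|x\|\le t\}$ and the pseudo-differential operator \eqref{eqps2} involves values $f(x+r\theta,t+r)$; since $\rho_t$ is bounded and $C^1$ in the relevant region and the kernel $r^{-\beta-1}$ is integrable at infinity, the interchange of integrals used to pass from \eqref{eqps1} to the FLT identity is justified, but I would note this explicitly.

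For the right-hand side, I would directly compute the FLT of $h(dx,t) = \frac{t^{-\beta}}{\Gamma(1-\beta)}\delta(\|x\|-t)\,\lambda(d\frac{x}{\|x\|})$. Writing $x = r\theta$ in polar coordinates, the spatial Fourier transform picks out $r=t$, giving $\int_{\Sd} e^{i\ip k{t\theta}}\,\lambda(d\theta)$, and then the Laplace transform in $t$ against $\frac{t^{-\beta}}{\Gamma(1-\beta)}e^{-st}$ yields
\begin{align*}
\int_{\Sd}\int_0^\infty \frac{t^{-\beta}}{\Gamma(1-\beta)} e^{-(s - i\ip k\theta)t}\,dt\,\lambda(d\theta)
= \int_{\Sd}(s - i\ip k\theta)^{\beta-1}\,\lambda(d\theta),
\end{align*}
using the standard Gamma-integral identity $\int_0^\infty v^{-\beta}e^{-zv}\,dv = \Gamma(1-\beta)z^{\beta-1}$ for $\Re z > 0$ (valid since $\beta\in(0,1)$ and $\Re(s-i\ip k\theta) = s \ge 0$, with the boundary case $s=0$ handled by continuity). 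This matches the FLT of the left-hand side exactly, so $\rho_t$ is the mild solution and the theorem follows.

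The main obstacle, such as it is, is bookkeeping rather than conceptual: one must be careful about the meaning of the delta-distribution right-hand side (it is a measure on $\spctim$, weakly measurable in $t$, as required by the definition of mild solution recalled before the theorem) and about the fact that $h(dx,t)$ is not a bounded measure uniformly in $t$ near $t=0$ because of the $t^{-\beta}$ prefactor, though it is locally integrable in $t$ and its Laplace transform converges, which is all that is needed. I would also remark that uniqueness of the mild solution among probability-measure families holds because the FLT determines the measures and $\psi(k,s)$ is nonvanishing for $(k,s)\ne 0$ (indeed $1/\psi(k,s)$ is the FLT of the $\sigma$-finite $0$-potential $U$), so \eqref{eq52} genuinely characterizes the law of $L_t$.
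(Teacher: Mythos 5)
Your proposal is correct and follows essentially the same route as the paper: both verify the mild-solution identity by computing the Fourier--Laplace transform of the right-hand side via the Gamma integral to get $\int_{\Sd}(s-i\ip k\theta)^{\beta-1}\,\lambda(d\theta)$ and matching it with $\psi(k,s)$ times the FLT of $\rho_t$ from \eqref{eq:levy-FLT}, using \eqref{eqps1} to identify the operator's symbol. The additional remarks you make on integrability near $t=0$ and on uniqueness are sensible but not part of the paper's (terser) argument.
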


\begin{proof}
First, we note that the tempered distribution on the right-hand side can be represented as the Borel measure on $\spctim$
\begin{align}
(dx, dt) \mapsto \frac{t^{-\beta}}{\Gamma(1-\beta)}\int_{S^{m-1}} \delta(dx-t\theta)
\lambda(d\theta) \, dt.
\end{align}
By a straightforward calculation, this has the Fourier-Laplace transform
\begin{align}
\int_{S^{m-1}}  (s-i\langle k,\theta\rangle)^{\beta-1} \lambda(d\theta).
\end{align}
The statement then follows from \eqref{eq:levy-FLT} and \eqref{eqps1}.
\end{proof}

We close with an example of the developed theory:

\begin{example}
Let us consider the one-dimensional symmetric case, so that $\lambda(d\theta) = (\delta(d\theta - 1) + \delta(d\theta + 1))/2$.
It follows from \eqref{eq:levy-FLT} and \eqref{eq5} that 
\[
\bar \rho_s(k)=
\frac{1}{2}
\frac{(s-ik)^{\beta-1}+(s+ik)^{\beta-1}}{(s-ik)^{\beta}+(s+ik)^{\beta}}.
\]
Therefore
\[
\frac{\partial^2 \bar{\rho}_s(k)}{\partial k^2} \bigg | _{k=0}=-\frac{1-\beta}{s^3},
\]
and an inverse Laplace transform immediately recovers the ballistic scaling regime
\[
\E[L^2(t)]=t^2(1-\beta)/2.
\]
The pseudo-differential equation for $\rho_t(dx)$ then reads
\[
\frac{1}{2}\left [ \left( \frac{\partial}{\partial t} - \frac{\partial}{\partial x} \right)^\beta +
\left( \frac{\partial}{\partial t} + \frac{\partial}{\partial x} \right)^\beta \right] q(x,t) =
[\delta(x-t)+\delta(x+t)]\frac{t^{-\beta}}{\Gamma(1-\beta)}.
\]
Here, the operators $\left( \frac{\partial}{\partial t} \mp \frac{\partial}{\partial x} \right)^\beta$
are also called fractional material derivatives with Fourier-Laplace symbols $(s\mp ik)^\beta$. 
These were introduced in \cite{Sokolov_Metzler} as a fractional extension
of the standard material derivative.

\end{example}

% \bibliographystyle{alpha}
% \bibliography{../../library.bib}

\end{document}